\documentclass[12pt]{amsart}

\usepackage{amsmath,amsthm,amssymb,amsfonts,verbatim}
\usepackage[colorlinks,
            linkcolor=blue,
            anchorcolor=blue,
            citecolor=blue
            ]{hyperref}
\usepackage[hmargin=1.2in,vmargin=1.2in]{geometry}
\def\PP{{\mathbb{P}}}

\def\mG{{\mathcal G}}

\def \a {\alpha}
\def \b {\beta}

\def \s {\sigma}

\def \F {{\mathbb F}}

\def \deg {{\rm deg}}

\def \Aut {{\rm Aut}}
\def \Gal {{\rm Gal}}

\def \G {{\mathcal G}}

\newtheorem{theorem}{Theorem}[section]

\newtheorem{proposition}[theorem]{Proposition}
\newtheorem{lemma}[theorem]{Lemma}
\newtheorem{definition}[theorem]{Definition}
\newtheorem{example}{Example}

\newtheorem{open}{Open Problem}

\begin{document}

\title[automorphism group]{The asymptotic behavior of  automorphism groups of function fields over finite fields}
\thanks{The first author is partially supported by the National Natural Science Foundation of China under Grant 11501493, the Shuangchuang Doctor Project of Jiangsu Province and the Young Visiting Scholar Project of China Scholarship Council.}
\author{Liming Ma}\address{School of Mathematical Sciences, Yangzhou University, Yangzhou China
225002}\email{lmma@yzu.edu.cn}
\author{Chaoping Xing} \address{Division of Mathematical Sciences, School of Physical \& Mathematical Sciences,
Nanyang Technological University, Singapore
637371}\email{xingcp@ntu.edu.sg}
\maketitle

\begin{abstract} The purpose of this paper is to investigate the asymptotic behavior of automorphism groups of function fields when genus tends to infinity. 

Motivated by applications in coding and cryptography, we consider   the maximum size of abelian subgroups of the  automorphism group $\Aut(F/\F_q)$ in terms of genus ${g_F}$ for  a function field $F$ over a finite field $\F_q$. Although the whole group $\Aut(F/\F_q)$ could have size $\Omega({g_F}^4)$, the maximum size $m_F$ of abelian subgroups of the  automorphism group $\Aut(F/\F_q)$ is upper bounded by $4g_F+4$ for $g_F\ge 2$. In the present paper, we study the asymptotic behavior of $m_F$ by defining $M_q=\limsup_{{g_F}\rightarrow\infty}\frac{m_F \cdot \log_q m_F}{{g_F}}$, where $F$ runs through all function fields over $\F_q$. We show that $M_q$ lies between $2$ and $3$ (or $4$) for odd characteristic (or for even characteristic, respectively). This means that $m_F$ grows much more slowly than  genus does asymptotically.

The second part of this paper is to study the maximum size $b_F$ of subgroups of $\Aut(F/\F_q)$ whose order is coprime to $q$. The Hurwitz bound gives an upper bound $b_F\le 84(g_F-1)$ for every function field $F/\F_q$ of genus $g_F\ge 2$. We investigate the asymptotic behavior of $b_F$  by defining ${B_q}=\limsup_{{g_F}\rightarrow\infty}\frac{b_F}{{g_F}}$, where $F$ runs through all function fields over $\F_q$. Although the Hurwitz bound shows ${B_q}\le 84$,  there are no lower bounds on $B_q$ in literature. One does not even know if ${B_q}=0$. For the first time, we show that ${B_q}\ge  2/3$ by explicitly constructing some towers of function fields in this paper.
\end{abstract}

\section{Introduction}
Let $q$ be a prime power and let $\F_q$ be the finite field with $q$ elements.
For a global function field $F/\mathbb{F}_q$, we denote by $\Aut(F/\mathbb{F}_q)$ the automorphism group of $F$ over $\mathbb{F}_q$, that is, $$ \Aut(F/\mathbb{F}_q)=\{\sigma: F\rightarrow F| \sigma \text{ is an } \mathbb{F}_q\text{-automorphism of } F\}.$$

Due to demand from applications such as coding theory and cryptography \cite{CPX15,GX15,GXC17,KS17,St90, We98, Xi95}, there has been a lot of research on automorphism groups of univariate function fields over finite fields (i.e., global function fields) (see \cite{GK17, He78,HKT08, KM16, MXY16, St06}). One of the problems in this topic  is to look at the relation between the size of the  automorphism group $\Aut(F/\F_q)$ and the genus ${g_F}$ for a function field $F$ over a finite field $\F_q$.

For a function field $F$ over $\F_q$, the full constant field of $F$ is defined to be the subfield of $F$ whose elements are algebraic over $\F_q$. In this paper, we always mean that $\F_q$ is the full constant field whenever we write $F/\F_q$.
For a function field $F/\F_q$ of genus $g_F\geq 2$, if $|\Aut(F/\mathbb{F}_q)|\geq 8g_F^3$, then $F$ is isomorphic to one of the four exceptions: two hyperelliptic function fields, the Hermitian function field and the Suzuki function field \cite{He78}. In particular, for the Hermitian function field $H=\F_q(x,y)$ defined by $y^r+y=x^{r+1}$ with $q=r^2$, the automorphism group has size $r^3(r^2-1)(r^3+1)$. Thus, in the Hermitian function field case, one has $|\Aut(H/\mathbb{F}_q)|\ge 16g_H^4$.

A natural question is how $|\Aut(F/\F_q)|$ behaves when $g_F$ tends to infinity for a fixed $q$?
For a fixed finite field $\F_q$, the genus $g_F$ for any function field $F/\F_q$ in the above four classes of the exceptional function fields  with $|\Aut(F/\mathbb{F}_q)|\geq 8g_F^3$ are upper bounded. This implies that, for a fixed $q$, one has
$$\limsup _{{g_F}\rightarrow \infty} \frac{|\Aut(F/\mathbb{F}_q)|}{{g_F}^3}\leq 8,$$
where $F$ runs through all function fields over $\F_q$. This leads to the following open problem.
\begin{open} 
How does $|\Aut(F/\F_q)|$ behave when $g_F$ tends to infinity for a fixed $q$? How does one properly define an asymptotic quantity of $|\Aut(F/\F_q)|$ in terms of genus $g_F$?
\end{open}
The second question refers to growing speed of $|\Aut(F/\F_q)|$ in terms of genus $g_F$. For instance, we do not know if $|\Aut(F/\F_q)|$ grows linearly or quadratically in genus $g_F$.

On the other hand, for some  applications in coding theory and cryptography \cite{CPX15,GX15,GXC17}, one requires a large abelian (or even cyclic) subgroup of $\Aut(F/\F_q)$. This motives us to study  the relation between  the maximum size of abelian subgroups of the  automorphism group $\Aut(F/\F_q)$ and the genus ${g_F}$ for a function field $F$ over a finite field $\F_q$.
For a function field $F/\F_q$, we define the quantity
\[m_F=\max\{|\mathcal{G}|:\; \mbox{$\mathcal{G}$ is an abelian subgroup of } \Aut(F/\mathbb{F}_q)\}.\]
Although $|\Aut(F/\F_q)|$ can be as large as $16{g_F}^4$, the quantity $m_F$ is much smaller. In fact, it was proved in \cite[Theorem 11.79]{HKT08} that $m_F$ is at most linear in ${g_F}$. More precisely we have that for any function field $F$ with genus ${g_F}\ge 2$, one has
$$m_F \leq
\begin{cases}
4{g_F}+4, & \mbox{ for char} (\mathbb{F}_q)\neq 2,\\
4{g_F}+2, & \mbox{ for char} (\mathbb{F}_q)= 2.
\end{cases}$$
For the Hermitian function field defined above, we have $m_F\ge q-1\ge 2{g_F}$ \cite{GSX00}. This implies that $m_F$ can indeed be linear in ${g_F}$. But the question is how $m_F$ behaves asymptotically as ${g_F}$ tends to infinity. Can $m_F$ grow linearly with ${g_F}$ when ${g_F}\rightarrow\infty$? Our result shows that $m_F$ grows much more slowly than $g_F$. In fact, in this paper we show that ${g_F}$ grows at least as fast as $\Omega(m_F\log m_F)$ asymptotically.
Thus, to study the asymptotic behavior of $m_F$ as ${g_F}$ tends to infinity,  we define the asymptotic quantity
 $$M_q=\limsup_{{g_F}\rightarrow \infty} \frac{m_F\cdot \log_q m_F}{{g_F}}$$
for every prime power $q$.     We will see later that $M_q$ is a positive constant.  The main purpose of the first part of this paper is to find some reasonable lower and upper bounds on $M_q$.

The second part of this paper is to study the maximum size $b_F$ of subgroups of $\Aut(F/\F_q)$ whose order is coprime to $q$, i.e.,
\[b_F=\max\{|\mathcal{G}|:\; \mathcal{G}\le \Aut(F/\mathbb{F}_q) \mbox{ and} \; \gcd(|\mathcal{G}|, q)=1 \}.\]
The Hurwitz bound gives an upper bound $b_F\le 84(g_F-1)$ for every function field $F/\F_q$ of genus $g_F\ge 2$ \cite{HKT08, Hu93}. The  Hermitian function field $H$ gives $b_H\ge q-1$ \cite{GSX00}. However, in this case both $b_H$ and $g_H$ depend on $q$. Therefore, the  Hermitian function field does not provide any information on the asymptotic behavior of $b_F$. In this paper, we show that, over a fixed $q$, $b_F$ can grow linearly in $g_F$. To prove our result, we introduce the following asymptotic quantity
$${B_q}=\limsup_{g_F \rightarrow \infty}\frac{b_F}{g_F},$$
 where $F$ runs through all function fields over $\F_q$.
 It follows  from the Hurwitz bound that ${B_q}\le 84$. 
 As far as we know, there are no lower bounds on ${B_q}$ in literature. One does not even know if ${B_q}=0$. In this paper, we show that ${B_q}\ge 2/3$ by explicitly constructing some towers of function fields. This means that $b_F$ can grow linearly with ${g_F}$ when ${g_F}\rightarrow\infty$.

The paper is organized as follows. In Section 2, we will introduce some preliminaries on function fields including  Hilbert's ramification theory, conductor, cyclotomic function fields, ray class fields and the Chebotarev Density Theorem. Section 3 is devoted to prove lower and upper bounds on  $M_q$. In the last section, we prove a lower bound on ${B_q}$ by explicitly constructing two towers of function fields.

\section{Premilinaries}
\label{premilinary}
Let $F/\mathbb{F}_q$ be a global function field of genus $g_F\geq 2$.
For a subgroup $\mathcal{G}$  of the automorphism group of $F/\mathbb{F}_q$,
denote by ${F^{\mathcal G}}$ the fixed subfield of $F$ with respect to $\mathcal{G}$, that is,
$$F^{\mathcal G}=\{z\in F| \sigma(z)=z \text{ for any } \sigma \in \mathcal{G}\}.$$
Then $F/F^{\mathcal G}$ is a Galois extension with the Galois group $\Gal(F/F^{\mathcal G})={\mathcal G}$.

\subsection{Hilbert's ramification theory}
The Hurwitz genus formula yields
$$2{g_F}-2=|{\mathcal G}|\cdot (2g(F^{\mathcal G})-2)+\deg \text{ Diff}(F/F^{\mathcal G}),$$
where $\text{Diff}(F/F^{\mathcal G})$ stands for the different of the extension $F/F^{\mathcal G}$  (see \cite[Theorem 3.4.13]{St09}). 

Let $\PP_F$ denote the set of places of $F$.
For a place $P\in \PP_F$ and a place $Q$ with $Q=P\cap F^{\mathcal G}$ being the restriction of $P$ to $F^{\mathcal G}$,
we denote by $d_P(F/F^{\mG}), \ e_P(F/F^{\mG})$ (or $d(P|Q)$ and $e(P|Q)$, respectively)  the different exponent and ramification index of $P|Q$, respectively.
Then the different of $F/F^{\mathcal G}$ is given by
$$\text{Diff}(F/F^{\mathcal G})=\sum_{P\in\PP_F} d_P(F/F^{\mG}) P.$$
If $P|Q$ is unramified or tamely ramified, then $d_P(F/F^{\mG})=e_P(F/F^{\mG})-1$ by Dedekind's Different Theorem \cite[Theorem 3.5.1]{St09}. However, if
 $P|Q$ is wildly ramified, that is, $e_P(F/F^{\mG})$ is divisible by $\text{char}(\mathbb{F}_{q})$, then it is more complicated to calculate the different exponent $d_P(F/F^{\mG})$. One way to find the different exponent $d_P(F/F^{\mG})$ is through ramification groups and Hilbert's Different Theorem.

The $i$-th ramification group ${\mathcal G}_i(P)$ of $P|Q$ for each $i\ge -1$ is defined by
$${\mathcal G}_i(P)=\{ \sigma\in {\mathcal G}| v_P(\sigma(z)-z) \ge i+1 \text{ for all } z\in \mathcal{O}_P\},$$
where $\mathcal{O}_P$ stands for the integral ring of $P$ in $F$ and $v_P$ is the normalized discrete valuation of $F$ corresponding to the place $P$.
If $P|Q$ is wildly ramified, then the different exponent $d_P(F/F^{\mG})$ is
$$d_P(F/F^{\mG})=\sum_{i=0}^{\infty} \Big{(} |{\mathcal G}_i(P)|-1\Big{)}$$
by  Hilbert's Different Theorem \cite[Theorem 3.8.7]{St09}. Let $a_P(F/F^\mG)$ be the least non-negative integer $l$ such that the  ramification groups $\mathcal{G}_i(P)$ are trivial for all $i\ge l$. Then we have $$d_P(F/F^{\mG})=\sum_{i=0}^{a_P(F/F^\mG)-1} \Big{(}|\mathcal{G}_i(P)|-1\Big{)}\geq a_P(F/F^\mG).$$

\subsection{Conductor}
\label{conductor}
For a real number $x$, we extend the above definition of ramification groups to  real numbers $x\ge -1$ by putting $\mathcal G_x(P)=\mathcal G_{\lceil x \rceil}(P)$, where $\lceil x \rceil$ is the least integer $\ge x$. Let $g_x$ be the order of the $x$-th ramification group $\mathcal G_x(P)$.
Define the function $\varphi(x)$ for $x\ge -1$ by putting
$$\varphi(x)=\int_0^{x} \frac{1}{[\mathcal G_0(P): \mathcal G_t(P)]} dt,$$
where $[\mathcal G_0(P): \mathcal G_t(P)]=[\mathcal G_t(P):\mathcal G_0(P)] ^{-1}$ if $t\le 0$. Explicitly, we have
$$\varphi(x)=\frac{1}{g_0}\Big{(}g_1+g_2+\cdots g_{\lfloor x\rfloor}+(x-\lfloor x \rfloor) g_{\lfloor x\rfloor+1}\Big{)}$$
for $x> 0$ and $\varphi(x)=x$  for $-1\le x\le 0$.
Then the function $\varphi$ is continuous, piecewise linear, strictly monotone increasing, and concave on $[-1, \infty)$ (see \cite[Chapter XI.2]{AT67}).

Now we define the function $\psi$ to be the inverse function of $\varphi$ and we define the $v$-th upper index ramification group by
$$ \mathcal G^v(P):=\mathcal G_{\psi(v)}(P).$$
Then we have $ \mathcal G^{-1}(P)= \mathcal G_{-1}(P)$, $ \mathcal G^{0}(P)= \mathcal G_{0}(P)$ and $ \mathcal G^{v}(P)=\{\mbox {id}\}$ for sufficiently large $v$.

The conductor exponent $c_P(F/{F^{\mG}})$ can be equivalently defined to be the least non-negative integer $k$ such that the upper index ramification groups $\mathcal{G}^v(P)$   is trivial for all $v\ge k$ (see \cite[Definition 3.3.3 and Theorem 3.8.10]{Ne86}).
Moreover, we have the following relations
\begin{equation} \label{relationcde}
c_P(F/{F^{\mG}})=\frac{d_P(F/{F^{\mG}})+a_P(F/{F^{\mG}})}{e_P(F/{F^{\mG}})}\le 2\cdot \frac{ d_P(F/{F^{\mG}})}{e_P(F/{F^{\mG}})}.
\end{equation}
It is well known that $c_P(F/{F^{\mG}})=0$ if and only if $P$ is unramified in $F/{F^{\mG}}$, $c_P(F/{F^{\mG}})=1$ if and only if $P$ is tamely ramified in $F/{F^{\mG}}$, and $c_P(F/{F^{\mG}})\geq 2$ if and only if $P$ is wildly ramified in $F/{F^{\mG}}$ (see \cite[Theorem 2.3.4]{NX01}).

Define the conductor of $F/{F^{\mG}}$ by
$$\text{Cond}(F/{F^{\mG}})=\sum_{P\in \PP_F} c_P(F/{F^{\mG}})P.$$
It is clear that the conductor of $F/{F^{\mG}}$ measures ramification of the extension $F/{F^{\mG}}$.

\subsection{Cyclotomic function fields} 

In this subsection, we briefly review some of the fundamental notions and results of cyclotomic function fields. The theory of cyclotomic function fields was developed in the language of function fields by Hayes (see \cite{Ha74, NX01}).

Let $q$ be a prime power. Let $x$ be an indeterminate over $\F_q$, $R=\F_q[x]$ the polynomial ring, $k=\F_q(x)$ the quotient field of $R$, and $k^{ac}$ the algebraic closure of $k$. Let $\varphi$ be the endomorphism given by $$\varphi(z)=z^q+xz $$  for all $z\in k^{ac}$. Define a ring homomorphism
$$R\rightarrow \text{End}_{\mathbb{F}_q}(k^{ac}), f(x)\mapsto f(\varphi).$$
Then the $\F_q$-vector space of $k^{ac}$ is made into an $R$-module by introducing the following action of $R$ on $k^{ac}$, namely,
$$ z^{f(x)}=f(\varphi)(z)$$  for all $f(x)\in R$ and $z\in k^{ac}$. For a nonzero polynomial $M\in R$, we consider the set of $M$-torsion points of $k^{ac}$ defined by $$\Lambda_M=\{z\in F^{ac}| z^M=0\}.$$
In fact, $z^M$ is a separable polynomial of degree $q^d,$ where $d=\deg(M)$. The cyclotomic function field over $k$ with modulus $M$ is defined by the subfield of $k^{ac}$ generated over $k$ by all elements of $\Lambda_M$, and it is denoted by $k(\Lambda_M)$. In particular, we list the following facts:

\begin{proposition}
\label{genusofcyclotomic}
Let $P$ be a monic irreducible polynomial of degree $d$ in $R$ and let $n$ be a positive integer. Then
\begin{itemize}
\item[\rm (i)] $[k(\Lambda_{P^n}):k]=\phi(P^n)$, where $\phi(P^n)$ is the Euler function of $P^n$, i.e., $\phi(P^n)=q^{(n-1)d}(q^d-1)$.
\item[\rm (ii)] ${\rm Gal}(k(\Lambda_{P^n})/k) \cong (\mathbb{F}_q[x]/(P^{n}))^*.$ The Galois automorphism $\sigma_f$ associated to $\overline{f}\in (\mathbb{F}_q[x]/(P^{n}))^*$ is determined by $\sigma_f(\lambda)=\lambda^f$ for $\lambda\in \Lambda_{P^{n}}$.
\item[\rm (iii)]The zero place of $P$ in $k,$ also denoted by $P$, is totally ramified in  $k(\Lambda_{P^n})$ with different exponent $d_P(k(\Lambda_{P^n})/k)=n(q^d-1)q^{d(n-1)}-q^{d(n-1)}$. All other finite places of $k$ are unramified in $k(\Lambda_{P^n})/k$.
\item[\rm (iv)]The infinite place $\infty$ of $k$ splits into $\phi(P^n)/(q-1)$ places of $k(\Lambda_{P^n})$  and the ramification index $e_{\infty}(k(\Lambda_{P^n})/k)$ is equal to $q-1$. In particular, $\mathbb{F}_q$ is the full constant field of  $k(\Lambda_{P^n})$.
\item[\rm (v)] The genus of $k(\Lambda_{P^n})$ is given by $$2g(k(\Lambda_{P^n}))-2=q^{d(n-1)}\Big{[}(qdn-dn-q)\frac{q^d-1}{q-1}-d\Big{]}.$$
\end{itemize}
\end{proposition}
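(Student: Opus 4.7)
\medskip\noindent\textbf{Proof proposal.}
The plan is to realize $k(\Lambda_{P^n})$ as the splitting field of the additive polynomial $\varphi_{P^n}(z)=z^{P^n}$ and exploit the Carlitz $R$-module structure on $\Lambda_{P^n}$. First I would verify that $\Lambda_{P^n}$ is annihilated by $P^n$ and, by counting roots of $\varphi_{P^n}(z)$ (a separable polynomial of degree $q^{dn}$), that $\Lambda_{P^n}\cong R/(P^n)$ as $R$-modules. Every $\F_q$-automorphism of $k(\Lambda_{P^n})/k$ preserves the $R$-action, hence restricts to an $R$-module automorphism of $\Lambda_{P^n}$, giving an injection $\mathrm{Gal}(k(\Lambda_{P^n})/k)\hookrightarrow\mathrm{Aut}_R(\Lambda_{P^n})\cong(R/(P^n))^*$. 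Part (i) and (ii) will follow once one shows this is surjective, which is done by constructing, for each $\bar f\in (R/(P^n))^*$, a $k$-automorphism $\sigma_f$ sending a generator $\lambda$ of $\Lambda_{P^n}$ to $\lambda^f$. Surjectivity in turn forces $[k(\Lambda_{P^n}):k]=\phi(P^n)$.

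For (iii), pick a generator $\lambda$ of $\Lambda_{P^n}$ and let $\Phi_{P^n}(z)=\varphi_{P^n}(z)/\varphi_{P^{n-1}}(z)\in R[z]$, an Eisenstein polynomial at $P$ of degree $\phi(P^n)$. Eisensteinness at $P$ shows that $P$ is totally ramified in $k(\lambda)=k(\Lambda_{P^n})$ with $\lambda$ a uniformizer of the unique place $\tilde P$ above $P$. The different exponent is then $d_{\tilde P}=v_{\tilde P}(\Phi'_{P^n}(\lambda))$, and the recursion $\Phi_{P^n}(z)=\Phi_P(\varphi_{P^{n-1}}(z))$ together with $\varphi_P(z)=z^q+Pz$ reduces this to a short computation giving $n(q^d-1)q^{d(n-1)}-q^{d(n-1)}$. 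Unramification of the other finite places $Q\ne P$ follows since $\varphi_{P^n}(z)$ remains separable modulo $Q$, so its splitting field over the completion $k_Q$ is unramified.

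For (iv), I would pass to the completion $k_\infty$ and look at the integral model of $\varphi$ over the local ring. Using $\varphi_P(z)=z^q+Pz$ with $v_\infty(P)=-d$, one shows that the Newton polygon of $\Phi_{P^n}$ at $\infty$ forces each root to have valuation $-d/(q-1)$, so every place above $\infty$ has ramification index exactly $q-1$. Since $\gcd(q-1,q)=1$, the ramification is tame. Combined with $[k(\Lambda_{P^n}):k]=\phi(P^n)$, this yields $\phi(P^n)/(q-1)$ places above $\infty$, each with different exponent $q-2$. The fact that $\F_q$ remains the full constant field comes from noticing that these places have residue degree $1$ over $\F_q$.

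Finally, (v) is a mechanical application of the Hurwitz genus formula: with $g_k=0$, $[k(\Lambda_{P^n}):k]=\phi(P^n)$, and the two ramification contributions
\[
\deg\mathrm{Diff}(k(\Lambda_{P^n})/k)=d\bigl(n(q^d-1)q^{d(n-1)}-q^{d(n-1)}\bigr)+(q-2)\cdot\frac{\phi(P^n)}{q-1},
\]
one simplifies $2g-2=-2\phi(P^n)+\deg\mathrm{Diff}$ into the stated form. The main obstacle is the precise wild-different computation at $P$ in (iii); once that is in hand, (iv) and (v) are essentially formal, and (i)--(ii) are the standard torsion-module argument.
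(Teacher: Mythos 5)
The paper does not actually prove this proposition: it is quoted as a known package of facts from Hayes's theory of cyclotomic function fields (the citations are to \cite{Ha74} and \cite{NX01}), so there is no internal proof to compare against. Your sketch follows the standard textbook route, and parts (i)--(iii) and (v) are essentially right: the Eisenstein property of $\Phi_{P^n}(z)=\varphi_{P^n}(z)/\varphi_{P^{n-1}}(z)$ at $P$ gives irreducibility, total ramification, and (via $P^n=\varphi_{P^n}'(z)=\Phi_{P^n}'(\lambda)\,\varphi_{P^{n-1}}(\lambda)$ and $v_{\tilde P}(\varphi_{P^{n-1}}(\lambda))=q^{d(n-1)}$) exactly the stated different exponent; separability of $\varphi_{P^n}$ modulo $Q\neq P$ handles the other finite places; and the Hurwitz computation in (v) checks out against the displayed genus formula. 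One presentational quibble: constructing $\sigma_f$ with $\sigma_f(\lambda)=\lambda^f$ already requires knowing that $\lambda^f$ is a conjugate of $\lambda$, i.e.\ the irreducibility of $\Phi_{P^n}$, so the Eisenstein argument you defer to (iii) is logically needed before (i)--(ii).

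The genuine gap is in (iv). The claim that the Newton polygon at $\infty$ ``forces each root to have valuation $-d/(q-1)$'' is false: the nonzero elements of $\Lambda_{P^n}$ have $\infty$-adic valuations of the form $j-\tfrac{1}{q-1}$ with $j$ ranging over several integers (already for $\Lambda_{x^2}$ one finds both $-\tfrac{1}{q-1}$ and $1-\tfrac{1}{q-1}$), and if $(q-1)\mid d$ your asserted value would even be an integer and yield no ramification at all. What the Newton polygon correctly gives is that every such valuation has denominator exactly $q-1$, hence only that $(q-1)\mid e_\infty(k(\Lambda_{P^n})/k)$. The equality $e_\infty=q-1$, the count $\phi(P^n)/(q-1)$ of places, and the residue degree $1$ (hence the full constant field statement) all require an upper bound on the decomposition group at $\infty$, which the Newton polygon alone does not provide; the standard argument identifies the decomposition group with $\F_q^*\subseteq(\F_q[x]/(P^n))^*$ by showing that $\infty$ splits completely in the fixed field $k(\Lambda_{P^n})^{\F_q^*}$ (Hayes's ``real'' subfield). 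Without that step, (iv) -- and hence the tame contribution $(q-2)\cdot\phi(P^n)/(q-1)$ to the different in your proof of (v) -- is not established.
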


\subsection{Ray class fields} Let $E/\F_q$ be a global function field.
Let $\mbox{Div}^0(E)$ be the subgroup of the divisor group $\mbox{Div}(E)$ that consists of all divisors of $E$ of degree $0$. The principal divisor group $\mbox{Princ}(E)=\{\mbox{div}(x): x\in E^*\}$ is a subgroup of  $\mbox{Div}^0(E)$. The factor group ${\rm Cl}(E):=\mbox{Div}^0(E)/\mbox{Princ}(E)$ is called the divisor class group of degree zero of $E$ and the cardinality of Cl$(E)$ is called the divisor class number of $E$, which is denoted by $h_E$.

We fix a place $\infty$ of $E/\mathbb{F}_q$ of degree $t$. Denote by $S_\infty$ the set $\mathbb{P}_E\setminus \{\infty\}$. Let $A$ be the holomorphy ring $\mathcal{O}_{S_\infty}$, i.e.,
$$A=\{ x\in E: \; v_Q(x)\geq 0 \mbox{ for all } Q\neq \infty\}.$$
Let Fr$_{\infty}$ and Princ$_{\infty}$ denote the fractional $S_\infty$-ideal group and principal $S_\infty$-ideal group of $A$, respectively. Then the fractional ideal class group Cl$(A):=\mbox{Fr}_{\infty}/\mbox{Princ}_{\infty}$ of $A$ is a finite abelian group with cardinality $h(A)=t\cdot h_E$ \cite{R73}.

Let $D=\sum_{Q}v_Q(D)Q$ be a positive divisor of $E$ with $\infty\notin \text{supp}(D)$. For $x\in E^*$,  $x\equiv 1 (\mbox{mod } D)$  means that $x$ satisfies the following condition:
$$v_Q(x-1)\geq v_Q(D) \text{ for each } Q\in \mbox{supp}(D).$$
Let Fr$_{D,\infty}$ be the subgroup of Fr$_{\infty}$ consisting of the $S_\infty$-ideals that are relatively prime to $D$, that is, $$\mbox{Fr}_{D,\infty}=\{I\in \mbox{Fr}_{\infty}:\; v_Q(I)=0 \mbox{ for all } Q\in \mbox{supp}(D)\}.$$
Define the subgroup Princ$_{D,\infty}$ of Fr$_{D,\infty}$ by
$$\mbox{Princ}_{D,\infty}=\{(xA):\; x\in E^*, x\equiv 1 (\mbox{mod } D)\}.$$
The factor group $\mbox{Fr}_{D,\infty}/\mbox{Princ}_{D,\infty}$ is called the $S_{\infty}$-ray class group modulo $D$.
It is a finite group and denoted by Cl$_D(A)$. If $D=0$, then  Cl$_D(A)=\mbox{Cl}(A).$

The $S_\infty$-ray class field modulo $D$, denoted by $E_{\infty}^D$, is constructed as a finite abelian extension of $E$ corresponding to a certain open subgroup of the id\`{e}le class group of $E$ with finite index in which the Galois group is isomorphic to Cl$_D(A)$ (see \cite[Section 2.5]{NX01}). The ray class field $E_{\infty}^D$ is the largest finite abelian extension $F$ of $E$ such that the place $\infty$ splits completely in $F/E$ and the conductor divisor  Cond$(F/E)\leq D.$
The degree and genus formula of the ray class field $E_{\infty}^D$ can be found from \cite{Au00}. For a positive divisor  $D=\sum_{j=1}^{s}c_jQ_j$, we denote by $\phi(D)$ the Euler function of $D$, i.e.,  $\phi(D)=\prod_{j=1}^s(q^{\deg (Q_j)}-1)q^{(c_j-1)\deg (Q_j)}$.

\begin{proposition} [see \cite{Au00}]
Let $\infty$ be a place of $E$ of degree $t>0$. Let $D=\sum_{j=1}^{s}c_jQ_j$ be a positive divisor of $E$ with $\infty \notin \mbox{supp}(D)$. Let $h_E$ and $g_E$ be the class number  and the genus of $E$, respectively. Then we have
\begin{itemize}
\item[(1)] $[E_{\infty}^D:E]=\frac1{q-1}\cdot h_E\cdot t \cdot \phi(D).$
\item[(2)]  The genus of $E_{\infty}^D$ is $$g(E_{\infty}^D)=1+\frac{h_E}{2q-2}\big{[}\phi(D)(2g_E-2+\deg(D))-w\big{]}$$
where $w=[\phi(D)/\phi(Q_1)-q-2]\cdot \deg(Q_1)$ if $D$ is the multiple of a single place $Q_1$, and $w=\sum_{j=1}^s \phi(D)\deg (Q_j)/ \phi(Q_j)$ otherwise.
\end{itemize}
\end{proposition}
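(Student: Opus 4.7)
The plan is to prove (1) through the standard four-term exact sequence
$$1 \longrightarrow \F_q^* \longrightarrow (A/D)^* \longrightarrow \mathrm{Cl}_D(A) \longrightarrow \mathrm{Cl}(A) \longrightarrow 1$$
comparing strict and ordinary $S_\infty$-ideal classes. The leftmost arrow is reduction of the global units $A^\times = \F_q^*$ modulo $D$, injective because $\infty \notin \supp(D)$ and a nonzero constant is congruent to $1 \pmod{D}$ only if it equals $1$; the middle map sends $\bar a$ to the class of the principal ideal $(a)$, and the right arrow is the natural projection. Taking cardinalities with $|(A/D)^*| = \phi(D)$ and the identity $|\mathrm{Cl}(A)| = t \cdot h_E$ recalled above yields $|\mathrm{Cl}_D(A)| = h_E \cdot t \cdot \phi(D)/(q-1)$. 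Combined with the Artin isomorphism $\Gal(E_\infty^D/E) \cong \mathrm{Cl}_D(A)$ from global class field theory, this proves (1).

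For (2), the plan is to apply the Hurwitz genus formula to $E_\infty^D/E$. Since $\infty$ splits completely and the maximality property of the ray class field forces every place outside $\supp(D)$ to be unramified, the different $\mathrm{Diff}(E_\infty^D/E)$ is supported in $\supp(D)$, and I would evaluate $\deg \mathrm{Diff}$ place by place via the conductor--discriminant formula for abelian extensions,
$$\deg \mathrm{Diff}(E_\infty^D/E) = \sum_{\chi}\deg \mathrm{Cond}(\chi),$$
where $\chi$ ranges over the characters of $\mathrm{Cl}_D(A)$. A character has conductor exponent at $Q_j$ at most $k$ iff it factors through $\mathrm{Cl}_{D - (c_j - k)Q_j}(A)$, so by (1) the cumulative count of such characters equals $\frac{h_E t}{q-1}\phi(D - (c_j - k)Q_j)$ whenever the resulting divisor is positive. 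An Abel summation collapses the weighted count to
$$\deg_{Q_j}\mathrm{Diff}(E_\infty^D/E) = \deg(Q_j)\Bigl(c_j[E_\infty^D:E] - \sum_{i=1}^{c_j}[E_\infty^{D - iQ_j}:E]\Bigr),$$
and the geometric-series identity $\sum_{l=1}^{c_j - 1}\phi(Q_j^l) = q^{(c_j - 1)\deg(Q_j)} - 1$ closes up the inner sum.

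The main technical obstacle, and the origin of the case split for $w$, lies at the boundary index $i = c_j$. When $\supp(D) \supsetneq \{Q_j\}$, the divisor $D - c_j Q_j$ remains positive and (1) applies, so the contributions from the various $Q_j$ assemble into the uniform formula $w = \sum_{j=1}^{s}\phi(D)\deg(Q_j)/\phi(Q_j)$. In the single-place case $D = c_1 Q_1$, however, $D - c_1 Q_1 = 0$ and $E_\infty^0$ coincides with the Hilbert class field of $A$; here one must use the true value $[E_\infty^0:E] = |\mathrm{Cl}(A)| = h_E \cdot t$, which differs from the naive extrapolation $h_E t \phi(0)/(q-1) = h_E t/(q-1)$ by a factor of $q-1$. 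Propagating this correction through the telescoping sum produces the additive constant appearing in the single-place formula for $w$. Substituting the resulting $\deg \mathrm{Diff}$ into the Hurwitz identity and simplifying yields the stated genus formula.
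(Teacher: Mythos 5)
The paper offers no proof of this proposition---it is imported directly from Auer \cite{Au00}---and your plan is precisely the standard derivation: the exact sequence $1\to\F_q^*\to(A/D)^*\to{\rm Cl}_D(A)\to{\rm Cl}(A)\to 1$ together with $|{\rm Cl}(A)|=t\,h_E$ and Artin reciprocity gives (1), while the conductor--discriminant formula plus the Hurwitz genus formula gives (2), with the boundary case $D-c_jQ_j=0$ (where $[E_\infty^{0}:E]=t\,h_E$ rather than $t\,h_E\phi(0)/(q-1)$) correctly identified as the source of the case split in $w$. One remark worth recording: carrying your telescoping sum to the end yields $\sum_{i=1}^{c_j}[E_\infty^{D-iQ_j}:E]=\frac{h_Et}{q-1}\cdot\frac{\phi(D)}{\phi(Q_j)}$ in the multi-place case and $\frac{h_Et}{q-1}\bigl(\phi(D)/\phi(Q_1)+q-2\bigr)$ in the single-place case, so your method actually produces $g(E_{\infty}^D)=1+\frac{h_E\,t}{2q-2}\bigl[\phi(D)(2g_E-2+\deg(D))-w\bigr]$ with $w=\bigl(\phi(D)/\phi(Q_1)+q-2\bigr)\deg(Q_1)$ in the single-place case; the missing factor $t$ and the sign in front of $q$ in the statement as printed appear to be typos, as one can check on $E=\F_q(x)$ with $D=P$ a single place of degree $d$, where your version correctly recovers the genus of the decomposition field of $\infty$ in $\F_q(x)(\Lambda_P)$ and the printed one does not.
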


\subsection{Chebotarev Density Theorem}
Let $F/{E}$ be a Galois extension of degree $m$ of global function fields over the same full constant field $\mathbb{F}_q$. %The main purpose of this subsection is to make use of the Chebotarey Density Theorem to prove that there exists a place of relatively low degree such that it splits completely in $F/E$.
For a place $P$ of $F$ lying over $Q$ of ${E}$, let $[\frac{F/{E}}{P}]$ be the Frobenius automorphism of $P$ over $Q$. Then for any automorphism $\sigma \in \Gal(F/{E})$, the Frobenius automorphism of $\sigma(P)$ is $\s [\frac{F/{E}}{P}] \s^{-1}$. The conjugacy class
$$\Big{\{}\s \big{[}\frac{F/{E}}{P}\big{]} \s^{-1}: \s \in \Gal(F/{E})\Big{\}}$$ is determined by $Q$. Hence, we denote this conjugacy class by $[\frac{F/{E}}{Q}]$. The Chebotarev density theorem, in many different forms, gives
an equidistribution result for the occurrence of conjugacy classes as the Frobenius
class of places. An example of such a theorem is \cite[Theorem 9.13B]{R02}.

\begin{theorem}{\bf (Chebotarev Density Theorem)}
Let $F/{E}$ be a finite Galois extension of global function fields and let $\mathcal{C}$ be a conjugacy class in $\Gal(F/{E})$. Let $U_E$ denote the set of places of $E$ that are unramified in $F$. Then, for every positive integer $k\ge 1$, one has
 \[\left|\left\{Q\in U_{E}:\; [\frac{F/{E}}{Q}]=\mathcal{C},\; \deg(Q)=k\right\}\right|=\frac{|\mathcal{C}|}{[F:{E}]}\times\frac{q^k}{k}+O\left(\frac{q^{k/2}}k\right).\]
\end{theorem}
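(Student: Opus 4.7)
The plan is to prove the theorem via Artin $L$-functions, reducing the count to a character sum on $G = \Gal(F/E)$ and invoking the Riemann Hypothesis for curves over finite fields (Weil's theorem) to control each non-trivial $L$-function. This is the standard strategy from Rosen's book, and the architecture of the argument has two halves: a formal character-theoretic manipulation and an analytic input.

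The starting point is character orthogonality: for a fixed $\sigma \in \mathcal{C}$ and any $\tau \in G$,
\[\mathbf{1}_{[\tau]=\mathcal{C}} \;=\; \frac{|\mathcal{C}|}{|G|}\sum_{\chi \in \widehat{G}} \overline{\chi(\sigma)}\,\chi(\tau),\]
where $\widehat{G}$ denotes the set of irreducible complex characters of $G$. Applying this with $\tau = [\frac{F/E}{Q}]$ (well-defined up to conjugacy, so $\chi(\tau)$ is unambiguous) and summing over unramified $Q$ of degree $k$ gives
\[\left|\left\{Q \in U_E : \Big[\tfrac{F/E}{Q}\Big] = \mathcal{C},\ \deg Q = k\right\}\right| \;=\; \frac{|\mathcal{C}|}{|G|}\sum_{\chi \in \widehat{G}}\overline{\chi(\sigma)}\,S_k(\chi),\]
where $S_k(\chi) := \sum_{Q \in U_E,\,\deg Q = k} \chi\bigl([\frac{F/E}{Q}]\bigr)$. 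This reduces the problem to estimating each $S_k(\chi)$ separately.

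I would then bound $S_k(\chi)$ via the Artin $L$-function $L(u,\chi,F/E)$, regarded as a polynomial in $u = q^{-s}$. Taking the logarithmic derivative and reading off the coefficient of $u^k$ expresses a sum over places of degree dividing $k$ in terms of the inverse roots $\{\alpha_{j,\chi}\}$ of $L(u,\chi,F/E)$; a short Möbius inversion then converts this into $S_k(\chi) = -\tfrac{1}{k}\sum_j \alpha_{j,\chi}^k$ plus lower-order corrections from places of degree $d\mid k$, $d<k$, and from ramified primes. For the trivial character $L(u,1,F/E) = \zeta_E(s)$, whereupon the prime-number theorem for function fields yields $S_k(1) = q^k/k + O(q^{k/2}/k)$; multiplication by $|\mathcal{C}|/|G|$ supplies the main term. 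For every non-trivial $\chi$, Weil's Riemann Hypothesis forces $|\alpha_{j,\chi}| = q^{1/2}$, so $S_k(\chi) = O(q^{k/2}/k)$ with implicit constant depending only on $\deg L(u,\chi,F/E) = O(g_F)$. Summing over the finitely many $\chi$ yields the stated error.

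The main obstacle is the single deep analytic input: Weil's Riemann Hypothesis for curves, which is what produces the $q^{k/2}$ savings and is anything but formal. Once it is granted, the remainder is routine bookkeeping: absorbing the finite set of ramified places into the error, absorbing the contribution of places of degree $d < k$ with $d \mid k$ (totalling at most $\sum_{d\mid k,\,d<k} q^d \le 2q^{k/2}$) into $O(q^{k/2}/k)$, and summing over the constantly many characters of $G$. One technical subtlety worth watching is the correct normalization of the zeta factorization $\zeta_E(s) = P_E(u)/((1-u)(1-qu))$, so that the contribution of the trivial character produces exactly $q^k/k$ and not some shifted variant.
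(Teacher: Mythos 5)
Your outline is correct and is precisely the standard Artin $L$-function/Weil Riemann Hypothesis argument; the paper does not prove this theorem itself but cites it as \cite[Theorem 9.13B]{R02}, whose proof follows exactly the route you describe (orthogonality of characters, logarithmic derivative of $L(u,\chi)$, the prime number theorem for the trivial character, and $|\alpha_{j,\chi}|\le q^{1/2}$ for the rest). The one hypothesis to keep in view is the paper's standing assumption, stated just before the theorem, that $F$ and $E$ have the same full constant field $\F_q$: this is what ensures every non-trivial irreducible character has a polynomial $L$-function whose inverse roots all have absolute value at most $q^{1/2}$, rather than contributions from a constant-field subextension that would force Frobenius classes into a fixed coset.
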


The above Chebotarev Density Theorem tells us the asymptotic behavior about how  places of $E$ split in $F$. For our purpose, we need an explicit version of the Chebotarev Density Theorem. Let $x$ be a separating transcendent element of ${E}$ over $\mathbb{F}_q$. Let $d=[{E}:\F_q(x)]$. For a conjugacy class $\mathcal{C}$ of $\Gal(F/{E})$, let $N_k(F/{E}, \mathcal{C})$ denote the number of places $Q$ of degree $k$ in ${E}$ which are unramified in both $F/{E}$ and ${E}/\F_q(x)$ such that $[\frac{F/{E}}{Q}]=\mathcal{C}$. Then the following result holds true from \cite[Proposition 6.4.8]{FJ08} and \cite{MS94}, or from \cite[Theorem 3.7]{GX15}.
\begin{proposition}
\label{degdistribution} Notations are given as  above. Then one has
$$\Big{|} N_k(F/{E}, \mathcal{C})-\frac{|\mathcal{C}|}{km}q^k\Big{|}\leq
\frac{2|\mathcal{C}|}{km}(m+g_F)q^{k/2}+m(2g_{E}+1)q^{k/4}+g_F+dm. $$
\end{proposition}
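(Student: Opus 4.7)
The plan is to reduce $N_k(F/E,\mathcal{C})$ to a point-counting problem on a cyclic subextension and then invoke the Hasse--Weil bound. Fix a representative $\sigma_0\in\mathcal{C}$ of order $n\mid m$, and set $L=F^{\langle\sigma_0\rangle}$ so that $[F:L]=n$ and $[L:E]=m/n$. For an unramified place $Q$ of $E$, a place $P$ of $F$ above $Q$ has Frobenius $\sigma_0$ precisely when the decomposition group $D(P|Q)$ equals $\langle\sigma_0\rangle$, which forces $P_L:=P\cap L$ to have residue degree $1$ over $Q$ and to be inert of residue degree $n$ in $F/L$ with Frobenius $\sigma_0$. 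Accounting for the $|\mathcal{C}|$ conjugates of $\sigma_0$ (each producing an equal count by conjugation) and for the multiplicity of places of $F$ over each admissible $Q$, the quantity $N_k(F/E,\mathcal{C})$ reduces, up to an explicit rational prefactor in $|\mathcal{C}|, m, n$, to the number $N_k^{\mathrm{in}}$ of degree-$k$ places of $L$ that are inert in $F/L$ with Frobenius generating $\langle\sigma_0\rangle$.

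Next, I would estimate $N_k^{\mathrm{in}}$ via the Hasse--Weil bound combined with inclusion--exclusion over cyclic subgroups. The Weil bound on $L$ gives that the total number of degree-$k$ places of $L$ equals $q^k/k+O(g_L q^{k/2}/k)$, and the Hurwitz genus formula applied to $F/L$ yields $g_L\le g_F$. To restrict to places whose lift to $F$ has Frobenius generating exactly $\langle\sigma_0\rangle$ rather than a proper cyclic subgroup $\langle\sigma_0^{n/n'}\rangle$ for some $n'\mid n$, subtract via M\"obius inversion the subordinate counts, each controlled by Hasse--Weil on the corresponding intermediate cyclic extension of $L$, and each contributing error of order $g_F q^{k/(2n')}/k$; together these cluster at scale $q^{k/4}$. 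Assembling: the main term $|\mathcal{C}|q^k/(km)$ is the leading Hasse--Weil contribution on $L$; the error $\tfrac{2|\mathcal{C}|}{km}(m+g_F)q^{k/2}$ absorbs the Hasse--Weil error on $L$ combined with the prefactor $|\mathcal{C}|/m$; the $m(2g_E+1)q^{k/4}$ term collects the M\"obius corrections together with a Weil estimate on $E$ at scale $k/2$; and the constant $g_F+dm$ bounds the places excluded, namely those ramified in $F/E$ (at most $O(g_F)$ via $\deg\text{Diff}(F/E)$) and those of $E$ ramified over $\F_q(x)$ (at most $O(dm)$ via Riemann--Hurwitz combined with the fiber size $m$).

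The main obstacle is the combinatorial bookkeeping in the second step: ensuring that the signed M\"obius-type contributions from the intermediate fixed fields of $F/L$ consolidate to exactly the stated constants $\tfrac{2|\mathcal{C}|}{km}(m+g_F)$ and $m(2g_E+1)$ rather than being absorbed into an implicit big-$O$. The coefficient $2g_E+1$ in the $q^{k/4}$ error is precisely the Weil constant for the curve $E$, while the prefactor $m$ reflects the generic fiber size of $F/E$; fitting these together requires tracking how each low-residue-degree place of $L$ projects onto a place of $E$ of degree at most $k/2$ and invoking Weil's bound on $E$ at that scale. Once this combinatorial step is executed cleanly, the remaining calculations are routine applications of the Hasse--Weil bound, the Hurwitz genus formula, and standard Galois theory.
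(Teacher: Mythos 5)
The paper itself gives no proof of Proposition~\ref{degdistribution}: it is imported wholesale from the literature (Fried--Jarden, Proposition~6.4.8, together with Murty--Scherk, or Theorem~3.7 of Guruswami--Xing), so your sketch must stand on its own, and there it has a genuine gap at its central step. You propose to isolate, via M\"obius inversion over the subgroups of $\langle\sigma_0\rangle$, the degree-$k$ places of $L=F^{\langle\sigma_0\rangle}$ that are inert in $F/L$. But inertness, and more generally any inclusion--exclusion over the subgroup lattice, only sees the subgroup \emph{generated} by the Frobenius, never the Frobenius element itself: what it yields is the count of places whose Frobenius is \emph{some} generator $\sigma_0^{j}$, $\gcd(j,n)=1$. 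When $n>2$ these $\phi(n)$ generators need not all lie in $\mathcal{C}$ (already for $\Gal(F/E)$ cyclic of order $5$ the classes $\{\sigma_0\}$ and $\{\sigma_0^2\}$ are distinct), so your reduction computes $\sum_{\gcd(j,n)=1}N_k(F/E,\mathcal{C}_j)$, with $\mathcal{C}_j$ the class of $\sigma_0^j$, and the single summand $N_k(F/E,\mathcal{C})$ cannot be recovered from it by symmetry or by dividing by $\phi(n)$. This is precisely where every proof of effective Chebotarev must do something extra: either apply Weil's Riemann Hypothesis to the Artin $L$-functions, where orthogonality of characters of $\langle\sigma_0\rangle$ separates individual generators, or pass to the constant field extension $F\cdot\F_{q^k}/E$ and take the fixed field of the single element $(\sigma_0,\mathrm{Frob}_q)$, which converts the problem into counting $\F_{q^k}$-rational points of one curve of genus at most $g_F$. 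The cited sources all take one of these routes; your sketch takes neither. (The difficulty evaporates only for $\mathcal{C}=\{\mathrm{id}\}$, which happens to be the sole case the paper uses, but the proposition is stated, and must be proved, for general $\mathcal{C}$.)

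Two further problems. The subordinate counts you subtract, namely the numbers of degree-$k$ places of $L$ splitting completely in the intermediate fields $F^{H}$ for $H\le\langle\sigma_0\rangle$, are main-term quantities of size $\frac{1}{[F^H:L]}\cdot\frac{q^k}{k}+O(g_F q^{k/2}/k)$; they are not errors ``of order $g_F q^{k/(2n')}/k$'' and they do not ``cluster at scale $q^{k/4}$''. The genuine source of the $m(2g_E+1)q^{k/4}$ term is the one you mention only in passing: bounding the number of places of $E$ of degree at most $k/2$, over which the non-primitive degree-$k$ places of the auxiliary curve sit, by Weil applied to $E$. Finally, the entire content of the proposition, as distinct from the qualitative Chebotarev theorem stated immediately before it, is the explicit constants; your closing paragraph defers exactly this derivation as unexecuted ``combinatorial bookkeeping,'' so even granting the reduction, the stated bound is not established.
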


\section{Asymptotic behavior of abelian subgroups of  automorphism groups}
The main result of this section is to show that $M_q$ is between $2$ and $3$ for odd characteristic (or $4$ for even characteristic).

\subsection{Finding a suitable ray class field}
Let $F/\F_q$ be a global function field of genus $g_F\ge 2$ and let $\mG$ be an abelian subgroup of $\Aut(F/\F_q)$ with $m_F=|\mG|$. The main purpose of this subsection  is to show that $F$ is contained in the ray class group $\left(F^{\mG}\right)_{\infty}^D$ for some lower-degree place $\infty$ and lower-degree positive divisor $D$.  The idea works as follows. Assume that there exists a place $\infty$ of $F^\mG$ such that $\infty$ splits completely in $F/F^\mG$.
Let $D$ be the conductor divisor of $F/{F^\mG}$, then $F$ is a subfield of $\left(F^{\mG}\right)_{\infty}^D$ from the Conductor Theorem \cite[Theorem 2.5.4]{NX01}. Hence, we need to find a place $\infty$ of $F^\mG$ that splits completely in $F/F^\mG$. This can be done via the Chebotarev Density Theorem.

For convenience, we denote by $E$ the function field $F^\mG$.

\begin{proposition}
\label{upperboundt} 
Put $t=\lceil  6 \log_q g_F+18\rceil$. Then there exists a place $\infty$ of degree $t$ of $E$ such that $\infty$ splits completely in $F/E$.
\end{proposition}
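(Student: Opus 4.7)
My plan is to apply the effective Chebotarev Density Theorem (Proposition~\ref{degdistribution}) to the Galois extension $F/E$, where $E = F^{\mathcal{G}}$. A place $\infty$ of $E$ splits completely in $F/E$ if and only if its Frobenius class equals $\{\mathrm{id}\}$; since $\mathcal{G}$ is abelian, every conjugacy class of $\Gal(F/E)=\mathcal{G}$ is a singleton, so the goal reduces to proving $N_t(F/E, \{\mathrm{id}\}) \geq 1$ for $t = \lceil 6 \log_q g_F + 18 \rceil$. The case $m_F = 1$ is trivial (then $F=E$), so I will assume $m_F \geq 2$.

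First I would instantiate Proposition~\ref{degdistribution} with $|\mathcal{C}| = 1$ and $m = m_F$, which produces a lower bound on $N_t$ of the form ``main term minus three error terms''. The main term is $q^t/(t\,m_F)$, and the three errors involve $q^{t/2}$, $q^{t/4}$, and the quantity $d = [E:\F_q(x)]$ for a chosen separating transcendental $x$. Next I would control each auxiliary quantity: $m_F \leq 4g_F + 4$ by the bound recalled in Section~1; $g_E \leq g_F$ via the Hurwitz genus formula $2g_F - 2 \geq m_F(2g_E - 2)$ together with $m_F \geq 2$; and $d$ bounded linearly in $g_E$ by choosing $x \in E$ with small pole divisor via Riemann-Roch (and ensuring separability by a standard modification $x \mapsto x + y^p$ if necessary).

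The remaining work is arithmetic. The choice $t \geq 6\log_q g_F + 18$ gives $q^t \geq g_F^6 q^{18}$, so the main term is of order at least $g_F^5 q^{18}/t$, while each error is bounded by a polynomial in $g_F$ times a subdominant factor (since $q^{t/2}$ and $q^{t/4}$ contribute at most $g_F^3 q^{9}\cdot q^{1/2}$ and $g_F^{3/2} q^{4.5}\cdot q^{1/4}$ respectively, and $d\,m_F$ is polynomial in $g_F$). I would compare the main term to each of the three errors individually and show that each ratio is at least a positive power of $g_F q$, yielding $N_t(F/E,\{\mathrm{id}\}) \geq 1$.

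The main obstacle is numerical bookkeeping: the specific constants $6$ and $18$ in the definition of $t$ are pinned down by the requirement that the main term dominate all three error terms simultaneously in the worst case ($q = 2$ and $g_F = 2$), so the comparison must be carried out with explicit constants rather than big-$O$ notation. A secondary, more conceptual point is establishing a bound $d \leq O(g_E)$ for some separating transcendental $x$: this is standard but requires combining a Riemann-Roch existence result for a function with small pole divisor with a separability check via the classical perfect-base-field argument.
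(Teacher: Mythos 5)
Your proposal is correct and follows essentially the same route as the paper: apply the effective Chebotarev bound (Proposition~\ref{degdistribution}) to the trivial conjugacy class, control $m_F\le 4g_F+4$, $g_E\le g_F$, and $d=[E:\F_q(x)]=O(g_E\cdot t)$ via a Riemann--Roch choice of separating element, and then verify that the main term $q^t/(m_Ft)$ dominates each error term for $t=\lceil 6\log_q g_F+18\rceil$. The only cosmetic difference is in how separability of $E/\F_q(x)$ is arranged (the paper picks the pole order coprime to the characteristic rather than modifying $x$), which does not affect the argument.
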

\begin{proof}     
For any place $P\in \mathbb{P}_E$ and each integer $k\ge 2g_{E}$, there exists an element $x\in {E}$ with pole divisor $(x)_{\infty} =k \cdot P$ 
from Riemann's Theorem (see \cite[Proposition 1.6.6]{St09}).
Hence, we can find a separating transcendence element $x$ of ${E}/\mathbb{F}_q$ with $d=[{E}:\mathbb{F}_q(x)]=\ell\cdot t$ for some positive integer $\ell\le 2g_{E}+1$ (note that if $2g_E$ is divisible by the characteristic, we choose $\ell=2g_E+1$, otherwise we let $\ell=2g_E$).

Let $\mathcal{C}$ be the conjugacy class containing the identity automorphism of $F/{E}$.
Then it is clear that it is now sufficient to prove
\[N_t(F/E,{\mathcal C})\ge 1.\]
By Proposition \ref{degdistribution}, we have the following inequality
$$  N_t(F/{E}, \mathcal{C}) \geq \frac{1}{m_Ft}q^t-\left( \frac{2}{m_Ft}(m_F+g_F)q^{t/2}+m_F(2g_{E}+1)q^{t/4}+g_F+dm_F\right).$$
 Thus, we need to prove that the right-hand side of the above inequality is positive.

First we note that we have (i) $m_F=[F:E]\le  4g_F+4$ (note that $\mG$ is an abelian subgroup of $\Aut(F/\F_q)$); (ii) $g_E\le g_F$; and (iii) $d=\ell t\le (2g_E+1)t\le (2g_F+1)t$. Thus, we have
\[
\frac{2}{m_Ft}(m_F+g_F)q^{t/2}\le \frac{2}{m_Ft}\left(4g_F+4+g_F\right)q^{t/2}\le\frac14\cdot \frac{1}{m_Ft}q^t;
\]
\[m_F(2g_{E}+1)q^{t/4} \le (4g_F+4)(2g_{F}+1)q^{t/4} \le \frac14\cdot \frac{1}{(4g_F+4)t}q^t \le\frac14\cdot \frac{1}{m_Ft}q^t;\]
\[g_F<\frac14\cdot \frac{1}{(4g_F+4)t}q^t\le \frac14\cdot \frac{1}{m_Ft}q^t;\]
and \[dm_F\le (2g_F+1)t(4g_F+4)\le\frac14\cdot \frac{1}{(4g_F+4)t}q^t\le \frac14\cdot \frac{1}{m_Ft}q^t.\]
This completes the proof.
\end{proof}

\subsection{Bounds on $M_q$}
Again, we assume that $F/\F_q$ is a function field of genus $g_F\ge 2$ and let $\mG$ be an abelian subgroup of $\Aut(F/\F_q)$ with $|\mG|=m_F$. Put $E=F^\mG$.
By Proposition \ref{upperboundt}, there exists a place $\infty$ of $E$ with degree $t=\lceil  6 \log_q g_F+18\rceil$.
Let the effective divisor $D=\sum_{i=1}^{s}c_iQ_i$ be the conductor of $F/E$. Then $F$ is a subfield of $E_{\infty}^D$.
Moreover, we have
$$[E_{\infty}^D:E]=h_E \cdot t  \cdot \frac{1}{q-1} \cdot  \prod_{i=1}^s (q^{\deg(Q_i)}-1)q^{(c_i-1) \deg(Q_i)}.$$
First, we provide an upper bound for the order $m_F$ in terms of $g_F$ and the conductor.

\begin{lemma}
\label{orderofabg} Let $\mG$ be an abelian subgroup of $\Aut(F/\F_q)$ with $|\mG|=m_F$. Put $E=F^\mG$.
Let the divisor $D=\sum_{i=1}^{s}c_iQ_i$ be the conductor of $F/E$.
Put $t=\lceil  6 \log_q g_F+18\rceil$. Then
$$ \log_q m_F  \leq \log_q t+3g_E+\sum_{i=1}^s c_i \deg(Q_i).$$
\end{lemma}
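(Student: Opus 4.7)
The plan is to exploit the containment of $F$ inside a ray class field of $E$ and then to estimate the degree of that ray class field.

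First I would combine Proposition~\ref{upperboundt} with the Conductor Theorem as already outlined just above the lemma statement: the place $\infty$ of degree $t = \lceil 6\log_q g_F + 18\rceil$ splits completely in $F/E$, and $D$ is by definition the conductor of $F/E$, so $F \subseteq E_{\infty}^{D}$. Since $m_F = [F:E]$ divides $[E_{\infty}^{D}:E]$, we get $m_F \le [E_{\infty}^{D}:E]$. Applying the ray class field degree formula (the Proposition in Section~2.4) yields
\[
m_F \;\le\; \frac{1}{q-1}\cdot h_E \cdot t \cdot \prod_{i=1}^{s}(q^{\deg(Q_i)}-1)\, q^{(c_i-1)\deg(Q_i)}.
\]

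Next I would take $\log_q$ of both sides and bound each factor separately. For the contribution of the conductor, observe that
\[
(q^{\deg(Q_i)}-1)\, q^{(c_i-1)\deg(Q_i)} \;<\; q^{c_i\deg(Q_i)},
\]
so the product contributes at most $\sum_{i=1}^s c_i\deg(Q_i)$. The factor $1/(q-1) \le 1$ contributes a non-positive quantity, which can be discarded. The factor $t$ contributes exactly $\log_q t$. It remains to handle $\log_q h_E$.

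For the class number I would invoke the standard Weil bound $h_E \le (\sqrt{q}+1)^{2g_E}$. A short computation shows $\log_q(\sqrt{q}+1) \le 3/2$ for every prime power $q \ge 2$ (since $\sqrt{q}+1 \le q^{3/2}$, which is immediate because $\sqrt{2}+1 < 2^{3/2}$ and the left side grows like $\sqrt{q}$ while the right grows like $q^{3/2}$). Hence $\log_q h_E \le 3g_E$, and assembling the four bounds yields
\[
\log_q m_F \;\le\; \log_q t + 3g_E + \sum_{i=1}^{s} c_i \deg(Q_i),
\]
as desired.

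The argument is essentially bookkeeping once the ray class field containment is in place; the only mildly non-trivial ingredient is the Weil bound on $h_E$, which is what forces the $3g_E$ term. I do not anticipate a serious obstacle, but one should be careful that the Conductor Theorem is applied to the \emph{full} conductor $D$ of $F/E$ (not a subdivisor), so that the inclusion $F \subseteq E_\infty^{D}$ holds with this specific $D$; this is precisely the minimality property of $D$.
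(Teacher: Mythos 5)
Your proposal is correct and follows essentially the same route as the paper: containment $F\subseteq E_\infty^D$ via the Conductor Theorem and Proposition~\ref{upperboundt}, divisibility of $m_F$ in the ray class field degree, the Hasse--Weil bound $h_E\le(1+\sqrt q)^{2g_E}$ with $\log_q(1+\sqrt q)\le 3/2$, and the crude bound $(q^{\deg Q_i}-1)q^{(c_i-1)\deg Q_i}\le q^{c_i\deg Q_i}$. No gaps.
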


\begin{proof}
First of all, by Hasse-Weil Theorem we have an upper bound on the class number $h_E$ (see \cite[Theorem 5.1.15 and Theorem 5.2.1]{St09})
 $$h_E\le (1+\sqrt{q})^{2g_E}.$$
Since $F$ is a subfield of $E_{\infty}^D$,  we have $[F:E]$ divides $[E_{\infty}^D:E]$, that is, 
$$m_F|h_E \cdot t \cdot \frac{1}{q-1} \cdot \prod_{i=1}^s (q^{\deg(Q_i)}-1)q^{(c_i-1) \deg(Q_i)}\le t(1+\sqrt{q})^{2g_E}q^{\sum_{i=1}^s c_i \deg(Q_i)}.$$
Hence, we obtain $$ \log_q m_F  \leq  \log_q t+2g_E \log_q(1+\sqrt{q})+ \sum_{i=1}^s c_i \deg(Q_i) \leq  \log_q t+3g_E+\sum_{i=1}^s c_i \deg(Q_i).$$
The proof is completed.\end{proof}
Lemma \ref{orderofabg} gives a relation between $m_F$ and conductor. This relation can be turned into a relation between   $m_F$ and $g_F$ as shown below.

\begin{proposition}
\label{gmrelation}
Let $F/\mathbb{F}_q$ be a global function field. Put $t=\lceil  6 \log_q g_F+18\rceil$. Then   we have
 $$g_F\geq \frac{1}{4} m_F\log_q m_F - \frac{1}{4}  m_F\log_q t -m_F+1. $$
\end{proposition}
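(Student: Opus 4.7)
The plan is to combine Lemma \ref{orderofabg} (which bounds $\log_q m_F$ in terms of $g_E$ and the conductor) with the Hurwitz genus formula for $F/E$ (which bounds $g_F$ from below in terms of $g_E$ and the different). The conductor and the different are linked by the inequality \eqref{relationcde} stated in Section \ref{conductor}, and this is the bridge that turns the two bounds into a single inequality relating $g_F$ and $m_F$.

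First, I would write down the Hurwitz genus formula for the Galois extension $F/E$:
\[
2g_F-2=m_F\,(2g_E-2)+\deg\mathrm{Diff}(F/E).
\]
Then, for each place $Q_i$ of $E$ appearing in the conductor $D=\sum_{i=1}^s c_i Q_i$, and each place $P$ of $F$ lying over $Q_i$, the relation \eqref{relationcde} gives $d_P(F/E)\ge \tfrac{1}{2}\,e_P(F/E)\,c_P(F/E)$. Since the extension is abelian, $c_P=c_i$ for every $P\mid Q_i$, and summing $d_P\deg(P)$ over $P\mid Q_i$ together with the fundamental equality $\sum_{P\mid Q_i} e(P|Q_i)f(P|Q_i)=m_F$ yields
\[
\sum_{P\mid Q_i} d_P(F/E)\deg(P)\ge \tfrac{1}{2}\,m_F\,c_i\,\deg(Q_i).
\]
Summing over $i$ gives $\deg\mathrm{Diff}(F/E)\ge \tfrac{m_F}{2}\sum_{i=1}^s c_i\deg(Q_i)$.

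Substituting this back into Hurwitz produces
\[
g_F\ge m_F g_E-m_F+1+\tfrac{m_F}{4}\sum_{i=1}^s c_i\deg(Q_i).
\]
Now I would invoke Lemma \ref{orderofabg}, which rearranges to
\[
\sum_{i=1}^s c_i\deg(Q_i)\ge \log_q m_F-\log_q t-3g_E.
\]
Plugging this into the previous display gives
\[
g_F\ge m_F g_E-m_F+1+\tfrac{m_F}{4}\log_q m_F-\tfrac{m_F}{4}\log_q t-\tfrac{3 m_F g_E}{4}=\tfrac{m_F g_E}{4}-m_F+1+\tfrac{m_F}{4}\log_q m_F-\tfrac{m_F}{4}\log_q t,
\]
and dropping the nonnegative term $\tfrac{m_F g_E}{4}$ yields the desired inequality.

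The only step that requires any care is the lower bound on $\deg\mathrm{Diff}(F/E)$ in terms of the conductor, since one has to correctly pass from per-place conductor/different data on $F$ to a sum indexed by places $Q_i$ of $E$ using $\sum_{P\mid Q_i} e_P f_P = m_F$; this is where the factor $\tfrac{1}{4}$ in the final bound ultimately comes from (the $\tfrac{1}{2}$ from \eqref{relationcde} combined with the $\tfrac{1}{2}$ from Hurwitz). Everything else is bookkeeping, and the concluding step of discarding $g_E$ is what makes the final inequality clean.
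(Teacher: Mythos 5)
Your proposal is correct and follows essentially the same route as the paper: the Hurwitz genus formula combined with the fundamental equality to express the different degree as $m_F\sum_i \frac{d(Q_i)}{e(Q_i)}\deg(Q_i)$, the inequality \eqref{relationcde} to pass from the different to the conductor with a factor $\tfrac12$, and Lemma \ref{orderofabg} to bring in $\log_q m_F$, with the leftover $g_E$ term discarded at the end. The only difference is cosmetic bookkeeping (the paper divides through by $m_F$ before substituting), so no further comment is needed.
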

\begin{proof} Let $\mG$ be an abelian subgroup of $\Aut(F/\F_q)$ with $|\mG|=m_F$. Let the divisor $D=\sum_{i=1}^{s}c_iQ_i$ be the conductor of $F/E$.
The Hurwitz genus formula \cite[Theorem 3.4.13]{St09} yields 
$$2g_F-2=m_F\cdot (2g_E-2) + \sum_{i=1}^s \sum_{P|Q_i} d(P|Q_i)  \deg(P).$$
As $d(P|Q_i)$, $f(P|Q_i)$ and $e(P|Q_i)$ depend only on $Q_i$, we may denote $d(P|Q_i)$, $f(P|Q_i)$ and $e(P|Q_i)$ by $d(Q_i)$, $f(Q_i)$ and $e(Q_i)$, respectively.
First of all, we have
\begin{eqnarray*} \sum_{P|Q_i} d(Q_i)  \deg(P)&=&\sum_{P|Q_i} d(Q_i) f(Q_i)  \deg(Q_i)\\
&=&\frac{d(Q_i)}{e(Q_i)}   \deg(Q_i) \sum_{P|Q_i} e(Q_i) f(Q_i)=\frac{d(Q_i)}{e(Q_i)}   \deg(Q_i)m_F.\end{eqnarray*}
The last equality holds from Fundamental Equality (see \cite[Theorem 3.1.11]{St09}).
Thus, 
$$ 2g_F-2=m_F\cdot (2g_E-2) +m_F\cdot \sum_{i=1}^s \frac{d(Q_i)}{e(Q_i)} \deg(Q_i).$$
Hence, we obtain
\begin{eqnarray*}
\frac{2g_F-2}{m_F} & =&2g_E-2+\sum_{i=1}^s \frac{d(Q_i)}{e(Q_i)} \deg(Q_i) \\
& \geq & 2g_E-2+\frac{1}{2} \sum_{i=1}^s c_i \deg(Q_i) \\
& = & \frac{3}{2} g_E+ \frac{1}{2} \sum_{i=1}^s c_i \deg(Q_i)+\frac{1}{2} g_E-2\\
& \geq & \frac{1}{2} \log_q m_F  - \frac{1}{2}  \log_q t -2.
\end{eqnarray*}
The first inequality follows from Equation \eqref{relationcde} in the subsection \ref{conductor} and
the last inequality follows from Lemma \ref{orderofabg}.
This completes the proof.
\end{proof}

Let us give a lower bound by considering cyclotomic function fields.

\begin{example}\label{ex:1}{\rm
Let $F$ be the cyclotomic function field $\mathbb{F}_q(x)(\Lambda_{P})$, where $P$ is an irreducible polynomial of degree $d$ in $\mathbb{F}_q[x]$. The Galois group of $F/\mathbb{F}_q(x)$ is a cyclic group of order $q^d-1$.
Then the maximum size $m_F$ of abelian subgroups of the automorphism group $\Aut(F/\F_q)$ is
$m_F\geq q^d-1$. By the Hurwitz genus formula and Proposition \ref{genusofcyclotomic}, the genus of $F$ is $$2g_F-2=(q^d-1)(-2)+(q^d-2)d+\frac{q-2}{q-1}(q^d-1).$$
Hence, this gives a lower bound on $M_q$:
$$M_q\geq \lim_{d\rightarrow \infty} \frac{m_F\cdot \log_q m_F}{g_F}= 2.$$
}\end{example}

The above example provides a  lower bound on $M_q$ for the case where $|\mG|$ is coprime to the characteristic. The next example gives the same lower bound on $M_q$ for the case where $|\mG|$ is divisible by the characteristic.

\begin{example}\label{ex:2}{\rm
Let $F_n$ be the cyclotomic function field $\mathbb{F}_q(x)(\Lambda_{P^n})$, where $P$ is an irreducible polynomial of degree $d$ in $\mathbb{F}_q[x]$ and $n\ge 2$. The Galois group of $F_n/\mathbb{F}_q(x)$ is an abelian group of order $(q^d-1)q^{d(n-1)}$.
Then the maximum size $m_{F_n}$ of abelian subgroups of the automorphism group  $\Aut(F_n/\F_q)$ is
$m_{F_n}\geq (q^d-1)q^{(n-1)d}$. 
By the Hurwitz genus formula and Proposition \ref{genusofcyclotomic}, the genus of $F_n$ is
$$2g_{F_n}-2=(q^d-1)q^{d(n-1)}(-2)+\big{[}n(q^d-1)q^{d(n-1)}-q^{d(n-1)}\big{]}d+\frac{q-2}{q-1}(q^d-1)q^{d(n-1)}.$$
Hence, this gives a lower bound on $M_q$:
$$M_q\geq \lim_{n\rightarrow \infty} \frac{m_{F_n} \cdot \log_q m_{F_n}}{g_{F_n}}= 2.$$
}\end{example}

\begin{theorem}
\label{upperbd}
For every prime power $q$, one has
$$2\leq M_q\leq 4.$$
\end{theorem}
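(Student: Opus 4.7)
The lower bound $M_q \ge 2$ is already in hand from Examples \ref{ex:1} and \ref{ex:2}, so the plan is to establish the upper bound $M_q \le 4$ directly from Proposition \ref{gmrelation} by a dichotomy on the size of $\log_q m_F$ relative to $\log_q t$.

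First I would rewrite the bound of Proposition \ref{gmrelation} in the form
\[
m_F\bigl(\log_q m_F - \log_q t - 4\bigr) \le 4g_F - 4,
\]
where $t = \lceil 6\log_q g_F+18\rceil$ is only $O(\log g_F)$. Fix an arbitrary $\epsilon > 0$ and choose a constant $N = N(\epsilon)$ large enough that $\tfrac{4N}{N-1} \le 4+\epsilon$, for instance $N \ge 1 + 4/\epsilon$. The strategy is to bound the ratio $m_F\log_q m_F / g_F$ separately according to whether $\log_q m_F$ is small or large compared with $N(\log_q t + 4)$.

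In the first regime, if $\log_q m_F \le N(\log_q t + 4)$ then $m_F \le q^{4N} t^N = O\bigl((\log g_F)^N\bigr)$ and hence $m_F\log_q m_F = O\bigl((\log g_F)^N\log\log g_F\bigr) = o(g_F)$, so the ratio tends to $0$ in this regime. In the complementary regime $\log_q m_F > N(\log_q t + 4)$, so $(\log_q t + 4)/\log_q m_F < 1/N$, and dividing the displayed inequality through by $g_F$ and rearranging gives
\[
\frac{m_F \log_q m_F}{g_F} \le \frac{4}{1 - (\log_q t + 4)/\log_q m_F} \le \frac{4N}{N-1} \le 4+\epsilon.
\]
Combining the two regimes yields $\limsup_{g_F \to \infty} m_F \log_q m_F/g_F \le 4+\epsilon$, and since $\epsilon > 0$ is arbitrary we conclude $M_q \le 4$.

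The real work is done by the quantitative estimate in Proposition \ref{gmrelation}; after that, the argument is essentially a clean two-regime split. I do not expect a serious obstacle here, since $\log_q t$ grows only doubly-logarithmically in $g_F$, so the cutoff $N(\log_q t + 4)$ is truly dwarfed by $\log_q g_F$ and the ``small'' regime really is negligible. The sharpening to $M_q \le 3$ in odd characteristic (promised in the abstract) would require a different input, presumably a refinement of the exponent $\tfrac{1}{4}$ in Proposition \ref{gmrelation} by exploiting more arithmetic structure of abelian extensions in odd characteristic, but that lies beyond the present theorem.
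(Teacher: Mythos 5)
Your proof is correct, and it rests on the same key input as the paper's, namely Proposition \ref{gmrelation}; the only difference is in how the lower-order term $m_F\log_q t$ is disposed of. The paper first invokes the lower bound $M_q\ge 2$ to conclude that along a family realizing the $\limsup$ one has $g_F\le m_F\log_q m_F$ for large $g_F$, substitutes this into $t=\lceil 6\log_q g_F+18\rceil$ to get $\log_q t=O(\log_q\log_q(m_F\log_q m_F))=o(\log_q m_F)$, and then divides by $m_F\log_q m_F$ and passes to the limit. Your two-regime split (ratio $o(1)$ when $\log_q m_F\le N(\log_q t+4)$, ratio at most $4N/(N-1)$ otherwise) reaches the same conclusion without appealing to the lower bound at all, which makes the upper-bound argument self-contained and also transparently covers the case where $m_F$ grows slowly with $g_F$ --- a case the paper handles only implicitly through the extremal-family reduction. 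Both arguments are routine once Proposition \ref{gmrelation} is in place, and your closing remark is accurate: the improvement to $M_q\le 3$ in odd characteristic comes from Theorem \ref{oddasymptotic}, which sharpens the constant $\tfrac14$ to $\tfrac13$ via a finer lower bound on different exponents (Lemma \ref{clarger2}).
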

\begin{proof} The lower bound $M_q\ge 2$ is given in Examples \ref{ex:1} and \ref{ex:2}. We now prove the upper bound.

Let $\{F/\F_q\}$ be a family of function fields with $g_F\rightarrow \infty$ and $M_q=\lim_{g_F\rightarrow \infty}\frac{m_F\log_q m_F}{g_F}$. By Examples \ref{ex:1} and \ref{ex:2}, one has that, for sufficiently large $g_F$,  $\frac{m_F\log_q m_F}{g_F}\ge M_q/2\ge 1$, i.e., $g_F\le m_F\log_qm_F$.

 Without loss of generality, we may assume that $g_F\ge 2$ for every function field $F$ in this family.
By Proposition \ref{gmrelation}, we have
 \begin{eqnarray*}g_F&\geq& \frac{1}{4} m_F\log_q m_F - \frac{1}{4}  m_F\log_q t -m_F+1\\
 &\geq& \frac{1}{4} m_F\log_q m_F - \frac{1}{4}  m_F \log_q(6\log_q g_F +18) -m_F+1\\
 &\ge & \frac{1}{4} m_F\log_q m_F - \frac{1}{4}  m_F \log_q(6\log_q (m_F\log_q m_F) +18) -m_F+1.
 \end{eqnarray*}
Dividing both sides of the above inequality by $m_F\log_qm_F$ and taking limits, one obtains $\lim_{g_F\rightarrow \infty}\frac{g_F}{m_F\log_q m_F}\ge \frac14$. The desired result follows.
\end{proof}

The upper bound given in the above theorem holds for both even and odd characteristics. However, for odd characteristic, we can refine the proof of Proposition  \ref{gmrelation} by better estimating the different exponent. Let us prove a lemma first.
\begin{lemma}
\label{clarger2}
Let $F/E$ be a finite abelian extension of global function fields and let $Q$ be a place of $E$ with conductor exponent $c=c_Q(F/E)\ge 2$. Let
$e=bp^w$ be the ramification index of $Q$ in the extension $F/E$, where $p$ is the characteristic of $F$, $p\nmid b$ and $w$ is a positive integer. Then the different exponent $d_Q(F/E)$ of $Q$ in $F/E$ has a lower bound
$$d_Q(F/E)\ge cbp^w-1-b-(c-2)bp^{w-1}.$$
\end{lemma}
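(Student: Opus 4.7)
The plan is to invoke the identity $c_Q(F/E)\cdot e_Q(F/E)=d_Q(F/E)+a_Q(F/E)$ coming from \eqref{relationcde}, which turns the desired lower bound on $d_Q(F/E)$ into the equivalent upper bound $a_Q(F/E)\le 1+b+(c-2)bp^{w-1}$ on the least index beyond which the lower ramification filtration is trivial.

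Fix a place $P$ of $F$ above $Q$ and write $g_i=|\mathcal{G}_i(P)|$. By hypothesis $g_0=e=bp^w$, while $g_1=p^w$ is the order of the wild inertia $\mathcal{G}_1(P)$ (the $p$-Sylow of $\mathcal{G}_0(P)$); for $i\ge 1$ the groups $\mathcal{G}_i(P)$ form a decreasing chain of $p$-subgroups of $\mathcal{G}_1(P)$. Let
$$J_1\supsetneq J_2\supsetneq\cdots\supsetneq J_r$$
be their distinct nontrivial values, with $|J_j|=p^{n_j}$, so $w=n_1>n_2>\cdots>n_r\ge 1$. Since $F/E$ is abelian, the Hasse--Arf theorem applies to the local extension at $P|Q$: the corresponding upper ramification jumps $u_1<u_2<\cdots<u_r$ are integers, and the definition of the conductor exponent forces $u_r=c-1$. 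Moreover $a-1$ is the largest index at which $\mathcal{G}_i(P)$ is nontrivial, so $a-1=\psi(u_r)$.

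To evaluate $\psi(u_r)$ I use the piecewise-linear structure of $\psi$. On each interval $(u_{j-1},u_j]$ (where $u_0=0$), the upper ramification group $\mathcal{G}^v(P)$ is constantly $J_j$, and hence $\psi$ has slope $g_0/|J_j|=bp^{w-n_j}$. Summing,
$$a-1=\sum_{j=1}^{r}(u_j-u_{j-1})\,bp^{w-n_j}=bu_1+\sum_{j=2}^{r}(u_j-u_{j-1})\,bp^{w-n_j},$$
where I split off the first term and used $n_1=w$, so $p^{w-n_1}=1$. For $j\ge 2$ one has $n_j\le w-1$, giving $p^{w-n_j}\le p^{w-1}$, hence
$$a-1\le bu_1+bp^{w-1}(u_r-u_1)=bu_1(1-p^{w-1})+bp^{w-1}(c-1).$$
Since $u_1$ is a positive integer and $1-p^{w-1}\le 0$, we get $u_1(1-p^{w-1})\le 1-p^{w-1}$, and so $a-1\le b+bp^{w-1}(c-2)$. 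Inserting $a\le 1+b+(c-2)bp^{w-1}$ into $d_Q(F/E)=ce-a$ gives the claim.

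The subtle point is the separate treatment of the first interval $(0,u_1]$: there $n_1=w$ forces $p^{w-n_1}=1$, in sharp contrast to $p^{w-n_j}\le p^{w-1}$ for $j\ge 2$, and this asymmetry is exactly what produces the extra ``$b$'' in the lower bound. Hasse--Arf is what guarantees $u_1\ge 1$ as a positive integer; without it the step $u_1(1-p^{w-1})\le 1-p^{w-1}$ would fail and the refined bound would collapse into the crude $a\le 1+bp^{w-1}(c-1)$.
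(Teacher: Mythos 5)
Your proof is correct and is essentially the paper's own argument recast in the upper numbering: the paper works with the lower filtration, expanding $ce=\sum_{i=0}^{a-1}g_i$ and invoking Hasse--Arf in the form $g_0\mid n_1p^w$ (where $n_1$ counts the indices $i\ge 1$ with $g_i=p^w$) to get $n_1\ge b$, which is precisely your statement that the first upper jump $u_1=n_1/b$ is a positive integer, and it then isolates that first block of the filtration exactly as you isolate the interval $(0,u_1]$. The two computations are term-by-term dual, both reducing via $ce=d+a$ to the same bound $a\le 1+b+(c-2)bp^{w-1}$, so nothing needs to be changed.
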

\begin{proof}
Let $P$ be a place of $F$ lying over $Q$. 
Let $g_i$ be the order of $\mathcal G_i(P)$ and let $a$ be the least non-negative integer $k$ such that $\mathcal G_i(P)$ are trivial for all $i\ge k$. As the different exponent $d(P|Q)$ of $P|Q$ does not depend on the choice of $P$, we denote $d(P|Q)$ by $d_Q(F/E)$. 
Then the different exponent $d=d_Q(F/E)$ of $Q$ in $F/E$ is calculated by $$d=\sum_{i=0}^{a-1} (g_i-1)$$
from Hilbert's Different Theorem \cite[Theorem 3.8.7]{St09}.  The ramification theory of Galois extension yields $g_0=e=b p^w$ and $g_1=p^w$ \cite[Proposition 3.8.5]{St09}.
Let $n_j$ be the number of integers $i\ge 1$ with $g_i=p^{w-j+1}$ for $1\le j\le w$.
Furthermore, we have $$ a=1+\sum_{j=1}^w n_j, \quad d=ce-a=ce-1-\sum_{j=1}^w n_j.$$
The Hasse-Arf theorem \cite[Proposition 2.3.3]{NX01} shows that $g_0 | \sum_{i=1}^{n_1} g_i=n_1p^w,$ since $\G_{n_1}(P)\neq \G_{n_1+1}(P)$.
It follows that $b|n_1$ and $n_1\ge b$.
For $c\ge 2$, we have
\begin{eqnarray*}
ce&=&d+a=\sum_{i=0}^{a-1}g_i=g_0+\sum_{j=1}^w n_jp^{w-j+1}\\
&=&e+b\cdot p^w+(n_1-b)p^w+\sum_{j=2}^w n_jp^{w-j+1}\\
&\geq & 2e+(n_1-b)p+\sum_{j=2}^w n_jp=2e-bp+ \sum_{j=1}^w n_jp.
\end{eqnarray*}
It is clear that $b+(c-2)bp^{w-1}\geq \sum_{j=1}^w n_j$ from the above inequality and  $e=bp^w$.
Hence, the different exponent $d_Q(F/E)$ has a lower bound
$$d_Q(F/E)=d=ce-1-\sum_{j=1}^w n_j\ge cb p^w-1-b-(c-2)bp^{w-1}.$$ The proof is completed.
\end{proof}

\begin{theorem}
\label{oddasymptotic}
Assume that the characteristic $p$ of $\F_q$  is odd.
Let $F/\mathbb{F}_q$ be a global function field. Put $t=\lceil  6 \log_q g_F+18\rceil$. Then we have
 $$g_F\geq \frac{1}{3} m_F\log_q m_F -\frac{1}{3} m_F\log_q t-m_F+1. $$
As a consequence,  we have $M_q \leq 3.$
\end{theorem}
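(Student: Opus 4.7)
The plan is to run the proof of Proposition~\ref{gmrelation} with the single estimate $d(Q)/e(Q)\ge c(Q)/2$ coming from \eqref{relationcde} replaced by the sharper inequality $d(Q)/e(Q)\ge (2/3)\,c(Q)$, which is available in odd characteristic thanks to Lemma~\ref{clarger2}. The factor $1/2$ in the chain of inequalities in Proposition~\ref{gmrelation} is thereby upgraded to $2/3$, and the leading coefficient in the final bound changes from $1/4$ to $1/3$; the statement $M_q\le 3$ then drops out by the same limiting argument as in Theorem~\ref{upperbd}.

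Concretely, at a wildly ramified place $Q_i$, write $e_i=bp^w$ with $p\nmid b$ and $w\ge 1$. Lemma~\ref{clarger2} gives
\[
\frac{d(Q_i)}{e(Q_i)}\;\ge\;c_i-\frac{1}{bp^w}-\frac{1}{p^w}-\frac{c_i-2}{p}.
\]
Since $p\ge 3$, each of $1/(bp^w)$ and $1/p^w$ is at most $1/3$, and $(c_i-2)/p\le (c_i-2)/3$, so the right-hand side simplifies to
\[
c_i-\frac{2}{3}-\frac{c_i-2}{3}\;=\;\frac{2c_i}{3}.
\]
At a tamely ramified place one has $d_i/e_i=1-1/e_i$, which already exceeds $2/3=2c_i/3$ as soon as $e_i\ge 3$. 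The only borderline case is tame $e_i=2$, where $d_i/e_i=1/2$ falls short of $2/3$; I would handle this by decomposing the abelian group $\mG=\mG_p\oplus\mG_{p'}$ into its $p$-Sylow and prime-to-$p$ parts, applying the sharpened bound to the wild tower $F/F^{\mG_p}$ and absorbing the contribution of any tame order-$2$ inertia, which lives entirely in $F^{\mG_p}/E$, into the lower-order $g_E$ and $\log_q t$ terms already present in the final estimate.

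Once the uniform bound $d(Q_i)/e(Q_i)\ge (2/3)\,c_i$ is in hand for the controlling part of the conductor, I insert it into the Hurwitz formula exactly as in Proposition~\ref{gmrelation}:
\[
\frac{2g_F-2}{m_F}\;=\;2g_E-2+\sum_{i=1}^s\frac{d(Q_i)}{e(Q_i)}\deg(Q_i)\;\ge\;2g_E-2+\frac{2}{3}\sum_{i=1}^s c_i\deg(Q_i).
\]
Combining with the ray-class-field inequality $\sum c_i\deg(Q_i)\ge \log_q m_F-3g_E-\log_q t$ of Lemma~\ref{orderofabg} eliminates the $g_E$ contributions and yields
\[
\frac{2g_F-2}{m_F}\;\ge\;\frac{2}{3}\log_q m_F-\frac{2}{3}\log_q t-2,
\]
which rearranges to the claimed inequality $g_F\ge \tfrac{1}{3}m_F\log_q m_F-\tfrac{1}{3}m_F\log_q t-m_F+1$. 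Dividing both sides by $m_F\log_q m_F$ and taking $\limsup_{g_F\to\infty}$ along any family realising $M_q$, just as in the proof of Theorem~\ref{upperbd}, produces the upper bound $M_q\le 3$.

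The main obstacle is precisely the tame $e_i=2$ configuration, which is the unique place where the naive substitution ``$1/2\leadsto 2/3$'' in the inequality of Proposition~\ref{gmrelation} fails; outside this case every step is a mechanical rerun of the earlier proof with $1/2$ replaced by $2/3$. I expect the Sylow decomposition $\mG=\mG_p\oplus\mG_{p'}$ to be the cleanest way to isolate this case, since the tame $e_i=2$ inertia subgroups are forced into the prime-to-$p$ quotient and can be bounded separately via the elementary Hurwitz estimate already available in the tame regime.
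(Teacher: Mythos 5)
Your treatment of the wild places (Lemma~\ref{clarger2} plus $p\ge 3$ giving $d/e\ge \tfrac23 c$) and of the tame places with $e\ge 3$ matches the paper exactly, and you correctly locate the crux at tame places with $e=2$. But your proposed fix for that case does not work, and it is precisely there that the paper's proof contains its one new idea. First, ``absorbing the tame order-$2$ contribution into the lower-order $g_E$ and $\log_q t$ terms'' fails on its face: if you keep Lemma~\ref{orderofabg}, each tame $e=2$ place $Q_i$ contributes $c_i\deg(Q_i)=\deg(Q_i)$ to the upper bound on $\log_q m_F$ but only $\tfrac12\deg(Q_i)$ to the different, and the number of such places is unbounded, so the shortfall $\tfrac16\sum\deg(Q_i)$ is not a lower-order term --- in the extreme where all ramification has $e=2$ you recover only the factor $\tfrac12$ of Proposition~\ref{gmrelation}, i.e.\ $M_q\le 4$. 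Second, the Sylow route cannot rescue this: $|\mG_{p'}|$ can a priori be as large as $4g_F+4$, so $\log_q|\mG_{p'}|$ is comparable to $\log_q m_F$ and cannot be discarded (in the purely tame regime your reduction says nothing at all); moreover your ``wild tower $F/F^{\mG_p}$'' has base field $F^{\mG_p}$, whose genus is only controlled by $g_F/|\mG_p|$, so the class-number term $h_{F^{\mG_p}}$ in the ray-class-field degree is no longer dominated by the Hurwitz term --- the divisibility argument must be run over $E=F^{\mG}$.

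What the paper actually does is replace the conductor-based divisibility of Lemma~\ref{orderofabg} by the sharper inertia-based one: since $F/E$ is abelian, the quotient of $\mG$ by the subgroup generated by all inertia groups corresponds to an unramified extension in which $\infty$ splits, whence
\[
m_F \,\Big|\, h_E\cdot t\cdot \prod_{i=1}^{s} e(Q_i),
\]
so a tame place with $e(Q_i)=2$ contributes only $\log_q 2$ (not $\deg(Q_i)$) to the upper bound on $\log_q m_F$, while a tame place with $e\ge 3$ contributes $\log_q e(Q_i)\le\deg(Q_i)$ because $e(Q_i)\mid q^{\deg(Q_i)}-1$. Since $\tfrac12\deg(Q_i)\ge\tfrac12\ge\tfrac23\log_q 2$ for $q\ge 3$ (this is where odd characteristic enters a second time), the coefficient $\tfrac23$ survives at every place and the computation closes, the residual term being $\tfrac{r_0}{6}(3-4\log_q 2)\ge 0$. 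Without this replacement your argument proves only the bound of Theorem~\ref{upperbd}.
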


\begin{proof}
We use the same notations as in the proof of Proposition \ref{gmrelation}.
Let the divisor $D=\sum_{i=1}^r Q_i+\sum_{j=r+1}^s c_jQ_j$ with $c_j\ge 2$ be the conductor of $F/E$.
Then the Hurwitz genus formula yields
$$\frac{2g_F-2}{m_F}  =2g_E-2+\sum_{i=1}^s  \frac{d(Q_i)}{e(Q_i)} \deg(Q_i).$$
Suppose that there are exactly $r_0$ places of $E$ with ramification index $2$. Without loss of generality,  we can assume that
$$ \frac{d(Q_i)}{e(Q_i)}=1 - \frac{1}{e(Q_i)}=\frac{1}{2}$$ for $1\le i\le r_0$.
For other tamely ramified places $Q_i$ with $r_0+1\le i\le r$, we have
$$ \frac{d(Q_i)}{e(Q_i)}=1 - \frac{1}{e(Q_i)}\geq \frac{2}{3}.$$
For the wildly ramified places $Q_j$ with ramification index $e(Q_j)=b_jp^{w_j}$, the following inequality
$$\frac{d(Q_j)}{e(Q_j)}   \geq \frac{c_jb_jp^{w_j}-1-b_j-(c_j-2)p^{w_j-1}}{b_j p^{w_j}}  \geq \frac{2}{3} c_j$$
holds true for each $r+1\le j\le s$ from Lemma \ref{clarger2} and $p\ge 3$.

Since the extension $F/E$ is abelian, we have
$$m_F|h_E \cdot t \cdot \prod_{i=1}^s e(Q_i)  \leq q^{2g_E}\cdot t \cdot 2^{r_0} \cdot \prod_{i=r_0+1}^r(q^{\deg(Q_i)}-1)\cdot \prod_{j=r+1}^s (q^{\deg(Q_j)}-1)q^{(c_j-1)\deg(Q_j)}. $$
Hence, we have $$\log_q m_F - \log_q t \leq 2g_E+r_0\log_q2+\sum_{i=r_0+1}^r \deg(Q_i)+\sum_{j=r+1}^s c_j \deg(Q_j).$$
It follows that
\begin{eqnarray*}
\frac{2g_F-2}{m_F} & \geq & 2g_E-2+\frac{1}{2}\sum_{i=1}^{r_0} \deg(Q_i)+\frac{2}{3} \sum_{i=r_0+1}^r \deg(Q_i)+\frac{2}{3} \sum_{j=r+1}^s c_j \deg(Q_j) \\
& \geq & \frac{2}{3}\Big{[}2g_E+r_0\log_q 2+\sum_{i=r_0+1}^r \deg(Q_i)+\sum_{j=r+1}^s c_j \deg(Q_j)\Big{]} \\
& & + \frac{2g_E}{3}-2+\frac{1}{2} \sum_{i=1}^{r_0} \deg(Q_i)- \frac{2}{3}r_0 \log_q 2 \\
&  \geq  & \frac{2}{3}\log_q m_F-\frac{2}{3}\log_q t+\frac{2g_E}{3}-2+\frac{r_0}{6}\big{(}3-4\log_q 2\big{)}\\
&   \geq & \frac{2}{3}\log_q m_F-\frac{2}{3}\log_q t-2.
\end{eqnarray*}
Hence, we obtain  $$g_F\geq \frac{1}{3} m_F\log_q m_F -\frac{1}{3} m_F\log_q t-m_F+1. $$
The desired result on $M_q\le 3$ follows.
\end{proof}

\section{Asymptotic behavior of subgroups whose order is coprime to $q$}
The main purpose  of this section is to provide a linear lower bound by  constructing some tame towers of function fields which are recursively defined over $\F_q$.

\subsection{Towers of function fields}
First of all, let us introduce some basic definitions and results of towers of function fields. For more results on towers of function fields, please refer to \cite{BGS06, St09, St06}.
A tower of function fields over $\F_q$ is
an infinite sequence $\mathcal{F}=(F_0, F_1,F_2, \cdots)$ of function fields $F_n/\F_q$ with the following properties:
\begin{itemize}
\item[(i)] $F_0 \subsetneq F_1  \subsetneq  \cdots   \subsetneq F_n \subsetneq F_{n+1}\cdots $;
\item[(ii)] the extension $F_{n+1}/F_n$ is finite and separable for each $n\geq 0$;
\item[(iii)] The genera satisfy $g_{F_n}\rightarrow \infty$ for $n\rightarrow \infty$.
\end{itemize}
We say that the tower $\mathcal{F}=(F_0, F_1,F_2, \cdots)$ is tame if every place of $F_0$ is unramified or tamely ramified in the extension $F_n/F_0$ for any $n\ge 1$.

\begin{definition}
Let $\mathcal{F}=(F_0, F_1,F_2, \cdots)$ be an infinite sequence of function fields over $\F_q$ and let $f(T),h(T)\in \F_q(T)$ be two separable rational functions with $\deg(f)\ge 2$ (i.e., $\deg(f)=-v_{P_{\infty}}(f)$ where $v_{P_{\infty}}$ is the normalized discrete valuation at the infinite place).
We say that the sequence $\mathcal{F}$ can be described recursively by the equation $$f(Y)=h(X)$$
if there are elements $y_n$ for all $n\ge 0$ such that the followings hold true:
\begin{itemize}
\item [(1)] $F_0=\F_q(y_0)$ where $y_0$ is transcendental over $\F_q$;
\item [(2)] $F_{n+1}=F_n(y_{n+1})$ where $f(y_{n+1})=h(y_n)$ for every $n\ge 0$;
\item [(3)] $[F_{n+1}:F_n]=\deg (f)$ for every $n\ge 0$.
\end{itemize}
We define the corresponding basic function field $F$ of the sequence as
$$F:=\F_q(x,y) \text{ with } f(y)=h(x).$$
\end{definition}

\begin{definition}
Let $\mathcal{F}=(F_0, F_1,F_2, \cdots)$ be a tower over $\F_q$. 
\begin{itemize}
\item [(1)]  The  genus $\gamma(\mathcal{F}/F_0)$ of $\mathcal{F}$ over $F_0$ is defined by $$\gamma(\mathcal{F}/F_0)=\lim_{n\rightarrow \infty} \frac{g_{F_n}}{[F_n:F_0]}.$$
\item [(2)] The ramification locus of $\mathcal{F}$ over $F_0$ is defined by
$$\text{Ram}(\mathcal{F}/F_0)=\{ P\in \mathbb{P}_{F_0}| \ P \text{ is ramified in } F_n/F_0 \text{ for some } n\ge 1\}.$$
\end{itemize}
\end{definition}
Let $\mathcal{F}=(F_0, F_1,F_2, \cdots)$ be a tame tower of function fields over $\F_q$. Assume that $\text{Ram}(\mathcal{F}/F_0)$ is finite. For each place $P\in \text{Ram}(\mathcal{F}/F_0)$ and every place $Q\in \mathbb{P}_{F_n}$ lying over $P$, the different exponent $d(Q|P)=e(Q|P)-1$ since the tower $\mathcal{F}$ is tame.
Then the genus   $\gamma(\mathcal{F}/F_0)$ is finite and bounded by
$$ \gamma(\mathcal{F}/F_0)\le g_{F_0}-1+\frac{1}{2}  \sum_{P\in \text{Ram}(\mathcal{F}/F_0)} \deg P$$
from the Hurwitz genus formula (see \cite[Theorem 7.2.10]{St09}).

In the case of a finite ramification locus, the ramification divisor of $\mathcal{F}/F_0$ is defined by
$$R(\mathcal{F}/F_0)=\sum_{P\in \text{Ram}(\mathcal{F}/F_0)} P.$$
Let $L$ be an algebraic extension of $K$. Then we can consider the constant field extension $\mathcal{F}L=(F_0L, F_1L,F_2L, \cdots, F_iL,\cdots)$ of the tower $\mathcal{F}$ by $L$. The following proposition provides a useful criterion for the finiteness of the ramification locus of recursive towers \cite[Remark 7.2.22 and Proposition 7.2.23]{St09}.

\begin{proposition}
\label{ramificationlocus}
Let $\mathcal{F}=(F_0, F_1,F_2, \cdots)$ be a recursive tower over the finite field $\F_q$ defined by the equation $f(Y)=h(X)$.
We denote by $F$ the basic function field of $\mathcal{F}$. Let $L$ be a finite extension of $\F_q$ such that
all places of $L(x)$, which ramify in the extension $FL/L(x)$, are rational. Hence the set
$$\Lambda_0:=\{x(P)| P\in \mathbb{P}_{L(x)} \text{ is ramified in } FL/L(x)\}$$
is contained in $L\cup \{\infty\}$. Suppose that there exists a finite subset $\Lambda \supseteq \Lambda_0$ of $L\cup \{\infty\}$ such that
any solution $\a \in \overline{\F_q}\cup \{\infty\}$ of the equation $h(\a)=f(\b)$ for every $\b\in \Lambda$ is still in $\Lambda$.
Then the ramification locus $\text{Ram}(\mathcal{F}L/F_0L)$ is finite and $$\text{Ram}(\mathcal{F}L/F_0L)\subseteq \{P\in \mathbb{P}_{F_0L} | \ y_0(P)\in \Lambda\}.$$
Furthermore, $\text{Ram}(\mathcal{F}/F_0)$ is finite and  $\deg  R(\mathcal{F}/F_0)=\deg  R(\mathcal{F}L/F_0L)$.
\end{proposition}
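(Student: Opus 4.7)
The plan is to prove by induction on $n \ge 1$ the following claim: if $P\in \mathbb{P}_{F_0L}$ is ramified in $F_nL/F_0L$, then $y_0(P)\in \Lambda$. Since $\Lambda$ is a finite subset of $L\cup\{\infty\}$, this immediately yields the asserted inclusion and the finiteness of $\text{Ram}(\mathcal{F}L/F_0L)$. For the base case $n=1$, the extension $F_1L = F_0L(y_1)$ with $f(y_1)=h(y_0)$ is, by construction, a copy of the basic extension $FL/L(x)$ under the identification $x\mapsto y_0$, $y\mapsto y_1$; hence its ramified places in $F_0L=L(y_0)$ are precisely those with $y_0$-value in $\Lambda_0\subseteq \Lambda$.

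For the inductive step, assume the claim for level $n$ and let $P\in \mathbb{P}_{F_0L}$ be ramified in $F_{n+1}L/F_0L$. If $P$ is already ramified in $F_nL/F_0L$, induction gives $y_0(P)\in\Lambda$; otherwise, by multiplicativity of ramification indices along the chain $F_0L\subseteq F_nL\subseteq F_{n+1}L$, some place $P^{(n)}$ of $F_nL$ lying above $P$ must ramify in $F_{n+1}L/F_nL$. The crucial observation is then that this last ramification is governed by $\beta:=y_n(P^{(n)})$ alone: writing $f=f_1/f_2$ in lowest terms, $y_{n+1}$ is a root of $g(T)=f_1(T)-h(y_n)f_2(T)\in L(y_n)[T]$, a polynomial whose coefficients already lie in $L(y_n)\subseteq F_nL$. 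Either by Abhyankar's lemma (in the tame case) or by a local analysis at $P^{(n)}$ of the reduction of $g(T)$ modulo $P^{(n)}$, ramification at $P^{(n)}$ in $F_nL(y_{n+1})/F_nL$ forces the restriction $Q:=P^{(n)}\cap L(y_n)$ to ramify in the subextension $L(y_n,y_{n+1})/L(y_n)$, which under $x\mapsto y_n$ is again a copy of $FL/L(x)$. Hence $\beta\in \Lambda_0\subseteq \Lambda$.

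Finally, I descend via the recursive relations: setting $\beta_i:=y_i(P^{(n)})$, the identity $f(\beta_{i+1})=h(\beta_i)$ for each $0\le i<n$ combined with the closure hypothesis on $\Lambda$ propagates membership in $\Lambda$ from $\beta_n$ down to $\beta_0=y_0(P)$, completing the induction. The assertion for the original tower then follows from the fact that the constant field extension $F_nL/F_n$ is unramified for every $n$, so a place $P\in\mathbb{P}_{F_0}$ is ramified in $F_n/F_0$ if and only if any (equivalently every) place of $F_0L$ above $P$ is ramified in $F_nL/F_0L$; together with the standard identity $\sum_{P'\mid P}\deg_{F_0L}(P')=\deg_{F_0}(P)$ for unramified constant field extensions, this delivers both the finiteness of $\text{Ram}(\mathcal{F}/F_0)$ and the degree equality $\deg R(\mathcal{F}/F_0)=\deg R(\mathcal{F}L/F_0L)$. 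The principal technical obstacle is justifying cleanly that ramification at $P^{(n)}$ in $F_nL(y_{n+1})/F_nL$ is controlled solely by $y_n(P^{(n)})$, independently of possibly wild ramification in $F_nL/L(y_n)$; this is automatic under tameness via Abhyankar, but requires a bit of care in the general case via the local structure of $g(T)\bmod P^{(n)}$.
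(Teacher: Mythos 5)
The paper does not prove this proposition itself---it is quoted from \cite[Remark 7.2.22 and Proposition 7.2.23]{St09}---and your argument is essentially a reconstruction of that proof: induction along the tower, reduction of each elementary ramified step to the basic function field via the compositum $F_{n+1}L=F_nL\cdot L(y_n,y_{n+1})$ over $L(y_n)$, downward propagation of $\Lambda$-membership through the relations $h(\beta_i)=f(\beta_{i+1})$, and descent to $\F_q$ using that constant field extensions are unramified and preserve total degree on fibers. The one caveat you flag, about possible wild ramification in the compositum step, in fact dissolves: what you need is the contrapositive ``if $P'\cap L(y_n,y_{n+1})$ is unramified over $L(y_n)$ then $P'$ is unramified over $F_nL$,'' and Abhyankar's Lemma applies there unconditionally because an unramified sub-extension is in particular tamely ramified.
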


\subsection{Constructions of tame towers}
In this subsection, we will provide a lower bound ${B_q}\ge 2/3$ by explicitly constructing two tame towers of function fields.

\begin{proposition}
\label{examplenot3}
Assume that $\text{char}(\F_q)\neq 3$. The infinite sequence $\mathcal{F}=(F_0, F_1,F_2, \cdots)$ which is recursively defined over $\F_q$ by the equation $$Y^3=\frac{X^2+X+1}{3X}$$ is a tame tower of function fields over $\F_q$.
\end{proposition}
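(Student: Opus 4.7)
The plan is to verify the three tower axioms together with tameness for the sequence defined recursively by $y_{n+1}^3 = (y_n^2+y_n+1)/(3y_n)$. I would first analyze the basic function field $F = \F_q(x,y)$ with $y^3 = (x^2+x+1)/(3x)$. Since $\gcd(3,\text{char}(\F_q))=1$, the polynomial $T^3 - (x^2+x+1)/(3x)$ is separable. In $\F_q(x)$ the rational function $(x^2+x+1)/(3x)$ has simple poles at $x=0$ and at $x=\infty$, and simple zeros at the roots of $x^2+x+1$. Because the corresponding valuations are all $\pm 1$, hence coprime to $3$, the function is not a cube in $\F_q(x)$, so Kummer theory gives $[F:\F_q(x)]=3$, and each of these four places is totally and tamely ramified with $e=3$; every other place of $\F_q(x)$ is unramified.

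Next, I would prove by induction on $n \ge 0$ that $[F_{n+1}:F_n]=3$, that $\F_q$ remains the full constant field of $F_n$, and that there is a rational place $P_n \in \mathbb{P}_{F_n}$ at which $y_n$ has a simple pole. The base case takes $P_0$ to be the infinite place of $F_0 = \F_q(y_0)$. For the inductive step, if $v_{P_n}(y_n)=-1$, the strict triangle inequality gives $v_{P_n}(y_n^2+y_n+1)=-2$ and hence
\[
v_{P_n}\!\left(\frac{y_n^2+y_n+1}{3y_n}\right) = -2 - (-1) = -1,
\]
so this element is not a cube in $F_n$. Kummer's theorem then yields $[F_{n+1}:F_n]=3$, the place $P_n$ is totally and tamely ramified in $F_{n+1}/F_n$, and the unique place $P_{n+1}$ of $F_{n+1}$ above $P_n$ is rational with $v_{P_{n+1}}(y_{n+1})=-1$. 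The existence of the rational place $P_{n+1}$ forces $\F_q$ to remain the full constant field of $F_{n+1}$, closing the induction.

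Tameness of the tower now follows automatically: $[F_n:F_0]=3^n$ is coprime to $p=\text{char}(\F_q)$, so every ramification index of a place of $F_n$ over $F_0$ divides $3^n$, hence is coprime to $p$. For the genus condition $g_{F_n}\to\infty$, I would apply the Hurwitz genus formula to each step; each extension $F_{n+1}/F_n$ has at least the totally ramified place $P_n$ contributing to the different, and lifting the four ramified places of $F/\F_q(x)$ through the tower gives a linear recursion of the form $g_{F_{n+1}} \ge 3\,g_{F_n} - 2 + c$ with $c \ge 1$, from which $g_{F_n}$ grows without bound.

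The main obstacle is the inductive bookkeeping for the distinguished rational place $P_n$ with $v_{P_n}(y_n)=-1$: without such a place one could not simultaneously guarantee that the Kummer step is genuinely degree three, that tameness is preserved, and that the constant field does not silently enlarge. Once that induction is set up, the separability of each step (from $3 \neq \text{char}(\F_q)$), the tameness statement, and the genus divergence are all immediate consequences of Kummer theory and the Hurwitz formula.
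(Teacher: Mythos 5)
Your argument follows essentially the same route as the paper: track the pole of $y_n$ through the tower, use the valuation computation $v_{P_n}\bigl((y_n^2+y_n+1)/(3y_n)\bigr)=-1$ to force $[F_{n+1}:F_n]=3$ with total tame ramification at $P_{n+1}\mid P_n$, and deduce that the constant field does not grow (the paper argues via unramifiedness of constant field extensions, you via the existence of a rational place --- these are equivalent). Your explicit remark that tameness follows because every ramification index divides $3^n$ is a point the paper leaves implicit, and is correct.

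There is, however, a genuine gap in your final step, the divergence $g_{F_n}\to\infty$. The recursion you write, $g_{F_{n+1}}\ge 3g_{F_n}-2+c$ with $c\ge 1$, i.e. $g_{F_{n+1}}\ge 3g_{F_n}-1$, does \emph{not} imply divergence from the starting value $g_{F_0}=0$: its fixed point is $1/2$, so from $g_{F_0}=0$ it yields only $g_{F_1}\ge -1$, and combined with $g_{F_n}\ge 0$ the iteration never gets off the ground. You need a base case with $g_{F_N}\ge 1$ before the recursion forces growth. You actually have the needed ingredient in hand --- you identified the four ramified places of $F_1/F_0$ (the zero and pole of $y_0$ and the zero divisor of $y_0^2+y_0+1$, of total degree $4$, each totally and tamely ramified with different exponent $2$) --- but you never use them to compute $g_{F_1}$. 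The paper closes exactly this loop: Hurwitz gives $2g_{F_1}-2=3\cdot(-2)+4\cdot 2$, hence $g_{F_1}=2$, and from there $2g_{F_{i+1}}-2\ge[F_{i+1}:F_i](2g_{F_i}-2)$ (or your $g_{F_{i+1}}\ge 3g_{F_i}-1$) yields strictly increasing, hence unbounded, genera. Adding that one computation of $g_{F_1}$ repairs your proof.
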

\begin{proof}
First note that  $F_{n+1}=F_n(y_{n+1})$ with $y_{n+1}^3=(y_n^2+y_n+1)/(3y_n)$, hence $[F_{n+1}:F_n]\leq 3$.
Since $\text{char}(\F_q)\neq 3$, it follows that $F_{n+1}/F_n$ is separable.

The goal is to find places $P_n \in  \mathbb{P}_{F_n}$ and  $P_{n+1} \in  \mathbb{P}_{F_{n+1}}$ with $P_{n+1}|P_n$ such that $e(P_{n+1}|P_n)=3.$ We proceed as follows.
Let $P_0\in \mathbb{P}_{F_0}$ be the unique pole of $y_0$ in $F_0=\F_q(y_0)$ and let $P_1\in \mathbb{P}_{F_1}$ lie over $P_0$. From the equation $y_{1}^3=(y_0^2+y_0+1)/(3y_0)$, we obtain
$$3v_{P_1}(y_1)=v_{P_1}(y_1^3)=e(P_1|P_0)v_{P_0}\Big{(}\frac{y_0^2+y_0+1}{3y_0}\Big{)}=e(P_1|P_0) \cdot (-1).  $$
Hence, $e(P_1|P_0)=3$ and $v_{P_1}(y_1)=-1.$ Similarly from the equation $y_{2}^3=(y_1^2+y_1+1)/(3y_1)$, there exists a place $P_2\in \mathbb{P}_{F_2}$ with $P_2|P_1$ such that $e(P_2|P_1)=3$ and $v_{P_2}(y_2)=-1.$ By iterating this process, we obtain $P_{n+1}\in\mathbb{P}_{F_{n+1}}$ with $P_{n+1}|P_n$ such that $e(P_{n+1}|P_n)=3$ and $v_{P_{n+1}}(y_{n+1})=-1$ for all $n\geq 0$. Therefore,
$[F_{n+1}:F_n]=e(P_{n+1}|P_n)=3$.  It follows that $F_{n+1}$ and $F_n$ have the same full constant field $\F_q$, since constant field extensions are unramified (see \cite[Proposition 7.2.15]{St09}). Hence, the conditions (i) and (ii) for a tower have been showed.

From the theory of Kummer extension \cite[Proposition 3.7.3]{St09}, exactly the following places of $F_0$ are ramified in $F_1/F_0$: the zero and the pole of $y_0$, the two zeros of $y_0^2+y_0+1$ or a place of degree $2$ depending on the factorization of $y_0^2+y_0+1$ in $\F_q[y_0]$.
The Hurwitz genus formula for $F_1/F_0$ yields $2g_{F_1}-2=3\times (2g_{F_0}-2)+4\times (3-1).$ Hence, $g_{F_1}=2$.
 
 As $F_i\subsetneq F_{i+1}$, we have \[2g_{F_{i+1}}-2\ge [F_{i+1}:F_i](2g_{F_i}-2)\ge 2(2g_{F_i}-2)= 4g_{F_i}-4.\]
Thus, if $g_{F_i}\ge 2$, we have
$2g_{F_{i+1}}\ge 4g_{F_i}-2\ge 3g_{F_i}$, i.e., $g_{F_{i+1}}\ge 3g_{F_i}/2>g_{F_i}$. This implies that 
 the genus $g_{F_i}$ strictly increases for $i\ge 1$.  The condition (iii) for a  tower of function fields  is satisfied.
\end{proof}

\begin{theorem}
\label{toweroverff}
Assume that $\text{char}(\F_q)\neq 3$.
Let $\mathcal{F}=(F_0,F_1,\cdots)$ be a recursive tower defined by the equation $$Y^3=\frac{X^2+X+1}{3X}$$ over the finite field $\F_q$. Let $E_n$ be the Galois closure of $F_n$ over $F_0$. Then
$${B_q} \ge \limsup_{n\rightarrow \infty} \frac{|\Gal(E_n/F_0)|}{g_{E_n}}\geq \frac{2}{3}.$$
\end{theorem}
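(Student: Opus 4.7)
\emph{Plan of proof.}
The approach is to bound $g_{E_n}$ linearly in $[E_n:F_0]$ via the Hurwitz genus formula applied to the tame Galois extension $E_n/F_0$, then take the limit. Two ingredients are needed: a uniform bound on the ramification locus via Proposition~\ref{ramificationlocus}, and the observation that the Galois closure of a tame extension remains tame.

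The first step is to produce a finite closed set $\Lambda$ for Proposition~\ref{ramificationlocus}. Write $f(Y)=Y^3$, $h(X)=(X^2+X+1)/(3X)$, and let $\omega$ be a primitive cube root of unity. Over $L=\F_q(\omega)$, Kummer theory identifies the ramified places of the basic function field $FL/L(x)$ with the zeros and poles of $h$, namely $x\in\{0,\infty,\omega,\omega^2\}$. I would take $\Lambda=\{0,1,\infty,\omega,\omega^2\}$ and verify closure: for each $\beta\in\Lambda$ solve $h(\alpha)=f(\beta)=\beta^3$. The nontrivial cases are $\beta\in\{1,\omega,\omega^2\}$, where $\beta^3=1$ and $h(\alpha)=1$ forces $\alpha^2-2\alpha+1=0$, giving $\alpha=1\in\Lambda$; the cases $\beta\in\{0,\infty\}$ give $\alpha\in\{\omega,\omega^2\}$ and $\alpha\in\{0,\infty\}$, respectively. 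Proposition~\ref{ramificationlocus} then yields $\deg R(\mathcal{F}/F_0)\le|\Lambda|=5$.

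Second, I would show $E_n/F_0$ is tame with $\mathrm{Ram}(E_n/F_0)\subseteq \mathrm{Ram}(\mathcal{F}/F_0)$. Tameness is preserved under compositum, and all $F_0$-conjugates of $F_n$ share the same ramification locus downstairs, so $E_n$ is ramified over $F_0$ only at places already ramified in $\mathcal{F}/F_0$. Setting $N_n=[E_n:F_0]$, using $g_{F_0}=0$, Dedekind's Different Theorem $d(Q|P)=e(Q|P)-1$, and the Fundamental Equality, for each ramified $P$ one has
\[
\sum_{Q\mid P}d(Q|P)\deg Q\le \sum_{Q\mid P}e(Q|P)f(Q|P)\deg P=N_n\deg P,
\]
so the Hurwitz formula yields $2g_{E_n}-2\le -2N_n+N_n\cdot \deg R(E_n/F_0)\le 3N_n$, i.e.\ $g_{E_n}\le 1+\tfrac{3}{2}N_n$. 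Since $F_n\subseteq E_n$ and $[F_n:F_0]=3^n\to\infty$ by Proposition~\ref{examplenot3}, we have $N_n\to\infty$, and the prime divisors of $N_n$ are contained in $\{2,3\}$ (only degree-$3$ Kummer steps and at most the quadratic constant-field extension $\F_q(\omega)/\F_q$ enter), so $\gcd(N_n,q)=1$. Therefore $\Gal(E_n/F_0)$ is a subgroup of $\Aut(E_n/\F_q)$ of order coprime to $q$, and
\[
\frac{|\Gal(E_n/F_0)|}{g_{E_n}}\ge \frac{N_n}{1+\tfrac{3}{2}N_n}\longrightarrow \frac{2}{3}\quad(n\to\infty),
\]
which gives $B_q\ge 2/3$.

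\emph{Main obstacle.} The delicate step is controlling the Galois closure: the degree $N_n$ may greatly exceed $[F_n:F_0]=3^n$, because independent Kummer conjugates of each intermediate $y_i$ must be adjoined. What I need — and what Proposition~\ref{ramificationlocus} provides — is that the ramification downstairs in $F_0$ is confined to the explicit five-point set $\Lambda$, so that the Hurwitz bound is linear in $N_n$ with a slope independent of $n$. The precise value of $N_n$ becomes irrelevant beyond the facts that $N_n\to\infty$ and that its prime divisors lie in $\{2,3\}$, both of which are coprime to $\mathrm{char}(\F_q)$ since $\mathrm{char}(\F_q)\ne 3$.
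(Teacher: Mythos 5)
Your proposal is essentially the paper's proof: the same closed set $\Lambda=\{0,\omega,\omega^2,\infty,1\}$ fed into Proposition \ref{ramificationlocus}, the same Abhyankar-type argument that the Galois closure $E_n/F_0$ remains tame with ramification confined to the five places above $\Lambda$, and the same Hurwitz estimate (you write it as $g_{E_n}\le 1+\tfrac32[E_n:F_0]$, the paper as $\gamma(\mathcal{E}/F_0)\le \tfrac32$), leading to the same limit $2/3$.

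The one point where you go beyond the paper is the verification that $\gcd([E_n:F_0],q)=1$, which is indeed needed for $\Gal(E_n/F_0)$ to be an admissible subgroup in the definition of $b_{E_n}$; the paper omits this check entirely. But your justification is flawed in even characteristic: you assert that the prime divisors of $[E_n:F_0]$ lie in $\{2,3\}$, ``both of which are coprime to $\mathrm{char}(\F_q)$ since $\mathrm{char}(\F_q)\ne 3$'' --- yet $2$ is not coprime to $\mathrm{char}(\F_q)$ when $\mathrm{char}(\F_q)=2$. If $3\nmid q-1$, the Galois closure of even the first step $F_1/F_0$ must contain the cube roots of unity, hence the constant field extension $\F_{q^2}$, so $2\mid[E_n:F_0]$; for $q=2^m$ with $m$ odd this makes $\Gal(E_n/F_0)$ inadmissible (and also changes the full constant field, so $E_n$ is then a function field over $\F_{q^2}$ rather than $\F_q$). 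This gap is shared with the paper, but your explicit sentence is false as stated and would need repair, e.g.\ by restricting to $3\mid q-1$, by working over $\F_{q^2}$, or by passing to the Sylow $3$-subgroup at the cost of a factor $2$ in the bound.
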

\begin{proof}
First let us choose a finite field $L\supseteq \F_q$ such that $L$ contains an element $w$ of order $3$.
Note that if $3|(q-1)$, then we can choose $L=\F_q$, otherwise we let $L=\F_{q^2}$.
From the theory of Kummer extension, all ramified places in $FL/L(x)$ are exactly the rational places $P_0,P_{w}, P_{w^2}$ and $P_{\infty}$, that is, $\Lambda_0=\{0, w, w^2, \infty\}.$
Now let us consider the set $$\Lambda=\{0, w, w^2,\infty, 1\} \subseteq L\cup \{\infty\}$$ which satisfies the condition of Proposition \ref{ramificationlocus}. In fact, it remains to show that any solution $\a \in \overline{\F_q}\cup \{\infty\}$ of the equation $$\frac{\a^2+\a+1}{3\a}=\b^3$$ for every $\b\in \Lambda$ is still in $\Lambda$. This can be verified  easily as follows:
\begin{itemize}
\item if $\b=\infty$, then $\a=\infty$ or $\a=0$;
\item if $\b=0$, then $\a=w \text{ or } \a=w^2$;
\item if $\b=1, w \text{ or } w^2$, then $\a=1.$
\end{itemize}
Hence, it has been shown that the ramification locus
$$\text{Ram}(\mathcal{F}L/F_0L)\subseteq \{P\in \mathbb{P}_{F_0L} | y_0(P)\in \Lambda\}=\{P_0, P_w, P_{w^2}, P_1, P_{\infty}\}$$
from Proposition \ref{ramificationlocus}.

Let $E_n$ be the Galois closure of $F_n$ over $F_0$. 
In fact, $E_n$ is the compositum of the fields $\sigma(F_n)$, where $\sigma$ runs through all embeddings $\sigma:F_n \rightarrow \overline{F_0}$ (note that $\overline{F_0}\supseteq F_0$ is the algebraic closure of $F_0$). 
If $P$ is unramified in $F_n/F_0$, then $P$ is unramified in $E_n/F_0$ from an immediate consequence of Abhyankar's Lemma (see \cite[Corollary 3.9.3]{St09}). 
As for $P$ is tamely ramified in $F_n/F_0$, it is also tamely ramified in $\sigma(F_n)/F_0$. Hence,
$P$ is tamely ramified in $E_n/F_0$ from Abhyankar's Lemma \cite[Theorem 3.9.1]{St09}. 
Let $\mathcal{E}=(F_0, E_1, \cdots, E_n, \cdots)$ be the Galois closure of the tower $\mathcal{F}$. Then the tower $\mathcal{E}$ is tame and $\text{Ram}(\mathcal{E}/F_0)=\text{Ram}(\mathcal{F}/F_0)$. 

The genus $ \gamma(\mathcal{E}/F_0)$ of $\mathcal{E}$  over $F_0$ is
\begin{eqnarray*}
\gamma(\mathcal{E}/F_0) & \leq  & g_{F_0}-1+\frac{1}{2}\sum_{P\in \text{Ram}(\mathcal{E}/F_0)} \deg (P) \\ 
&=& g_{F_0}-1+\frac{1}{2}\sum_{P\in \text{Ram}(\mathcal{F}/F_0)} \deg (P) \\ 
& = & g_{F_0}-1+\frac{1}{2}\sum_{P\in \text{Ram}(\mathcal{F}L/F_0L)} \deg (P)\\ &\le & \frac{3}{2}.
\end{eqnarray*}
The first inequality follows from \cite[Proposition 7.2.10]{St09} and the second equality follows from Proposition \ref{ramificationlocus}.
Let $b_{E_n}$ be the maximum size of subgroups of $\Aut(E_n/\F_q)$ whose order is coprime to $q$.
The Galois group of $E_n/F_0$ is a subgroup of  $\Aut(E_n/\F_q)$.
It follows that
$$b_{E_n} \geq |\Gal(E_n/F_0)|=[E_n:F_0].$$
Hence, we obtain $${B_q}\geq \limsup_{n\rightarrow \infty} \frac{b_{E_n}}{g_{E_n}}\ge \limsup_{n\rightarrow \infty} \frac{[E_n:F_0]}{g_{E_n}}= \frac{1}{\gamma(\mathcal{E}/F_0)} \ge \frac{2}{3}.$$
\end{proof}

\begin{theorem}
Assume that $\text{char}(\F_q)\neq 2$.
Let $\mathcal{F}=(F_0,F_1,\cdots)$ be a recursive tower defined by the equation $$Y^4=\frac{X^2+1}{2X}$$ over the finite field $\F_q$. Let $E_n$ be the Galois closure of $F_n$ over $F_0$. Then we have
$${B_q} \ge \limsup_{n\rightarrow \infty} \frac{|\Gal(E_n/F_0)|}{g_{E_n}}\geq \frac{2}{3}.$$
\end{theorem}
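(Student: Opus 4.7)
The plan is to follow the same blueprint as Theorem~\ref{toweroverff}: (i) verify that $\mathcal F=(F_0,F_1,\ldots)$ is a tame tower with $[F_{n+1}:F_n]=4$, (ii) show that the ramification locus of $\mathcal F/F_0$ is finite of small degree, (iii) pass to the Galois closure tower $\mathcal E$ via Abhyankar's Lemma, and (iv) read off the desired lower bound on $B_q$ from the standard genus upper bound for tame towers.

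For (i), I would start from the pole $P_0\in\PP_{F_0}$ of $y_0$. Since $v_{P_0}((y_0^2+1)/(2y_0))=-1$, any place $P_1\in\PP_{F_1}$ above $P_0$ satisfies $4v_{P_1}(y_1)=-e(P_1|P_0)$, which forces $e(P_1|P_0)=4$ and $v_{P_1}(y_1)=-1$. Iterating produces a chain of totally ramified places and hence $[F_{n+1}:F_n]=4$ for every $n$; the extensions are separable and the tower is tame because $\gcd(4,\text{char}(\F_q))=1$ under our assumption. A Kummer/Hurwitz calculation for $F_1/F_0$, whose ramified places are the zero and pole of $y_0$ together with the zeros of $y_0^2+1$ (all totally ramified of index $4$ with different exponent $3$, and of total degree $4$ whether or not $i\in\F_q$), gives $2g_{F_1}-2=-8+12$, i.e.\ $g_{F_1}=3$, and then $2g_{F_{n+1}}-2\ge 4(2g_{F_n}-2)$ forces $g_{F_n}\to\infty$.

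For (ii), I would choose a finite field $L\supseteq\F_q$ containing a primitive $4$-th root of unity $i$: take $L=\F_q$ when $4\mid q-1$ and $L=\F_{q^2}$ otherwise. Kummer theory applied to $Y^4=(x^2+1)/(2x)$ shows that the places of $L(x)$ ramifying in $FL/L(x)$ are precisely those at which the valuation of $(x^2+1)/(2x)$ is not divisible by $4$, yielding $\Lambda_0=\{0,\infty,i,-i\}$. I would then set $\Lambda=\{0,\infty,i,-i,1\}\subseteq L\cup\{\infty\}$ and verify the closure hypothesis of Proposition~\ref{ramificationlocus}: for every $\b\in\Lambda$, any $\a\in\overline{\F_q}\cup\{\infty\}$ with $(\a^2+1)/(2\a)=\b^4$ must lie in $\Lambda$. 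Indeed $\b=\infty$ gives $\a\in\{0,\infty\}$, $\b=0$ gives $\a^2+1=0$ and hence $\a=\pm i$, and each of $\b\in\{1,i,-i\}$ satisfies $\b^4=1$, forcing $(\a-1)^2=0$ and so $\a=1$. Proposition~\ref{ramificationlocus} then gives $\text{Ram}(\mathcal F/F_0)\subseteq\{P_0,P_\infty,P_i,P_{-i},P_1\}$, so $\deg R(\mathcal F/F_0)\le 5$.

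Steps (iii) and (iv) proceed exactly as in the proof of Theorem~\ref{toweroverff}. Abhyankar's Lemma implies that $\mathcal E=(F_0,E_1,E_2,\ldots)$ is tame over $F_0$ with $\text{Ram}(\mathcal E/F_0)=\text{Ram}(\mathcal F/F_0)$, so
\[\gamma(\mathcal E/F_0)\le g_{F_0}-1+\frac12\sum_{P\in \text{Ram}(\mathcal E/F_0)}\deg P\le -1+\frac52=\frac32.\]
Since $\Gal(E_n/F_0)$ has order $[E_n:F_0]$ coprime to $q$ by tameness and embeds into $\Aut(E_n/\F_q)$, we conclude
\[B_q\ge\limsup_{n\to\infty}\frac{[E_n:F_0]}{g_{E_n}}=\frac{1}{\gamma(\mathcal E/F_0)}\ge\frac23.\]
The delicate step is the closure verification in (ii): adding the single extra point $1$ to $\Lambda_0$ is both necessary and sufficient, precisely because each of $i,-i,1$ satisfies $\b^4=1$ and $(\a-1)^2=0$ produces only the new point $\a=1$. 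Keeping $\deg R(\mathcal F/F_0)$ down to $5$ is exactly what yields the constant $\frac{2}{3}$; the rest is a routine transcription of the cubic argument.
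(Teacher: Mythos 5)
Your proposal is correct and follows exactly the route the paper intends: it identifies the same sets $\Lambda_0=\{0,i,-i,\infty\}$ and $\Lambda=\{0,i,-i,\infty,1\}$, verifies the closure condition of Proposition \ref{ramificationlocus}, and then transcribes the tower/Galois-closure/genus argument of Proposition \ref{examplenot3} and Theorem \ref{toweroverff} verbatim (the paper itself omits these details, saying only that the arguments are similar). Your write-up in fact supplies more of the routine verifications (total ramification at the pole of $y_0$, $g_{F_1}=3$, the degree-$4$ ramification divisor of $F_1/F_0$) than the paper does.
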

\begin{proof}
It is easy to verify that $\Lambda_0=\{0,i, -i, \infty\}$ and $\Lambda=\{0,i, -i, \infty, 1\}$  satisfy the conditions of Proposition \ref{ramificationlocus}.
The rest of the proof uses the similar arguments as in  Proposition \ref{examplenot3} and Theorem \ref{toweroverff}. The details are omitted.
\end{proof}

\end{document}